\def\input@path{{\string"C:/Users/Connor/Dropbox/School/Research/Nonlinear Instability/Nonlinear Instability Paper/\string"/}}
\numberwithin{equation}{section}
\numberwithin{figure}{section}
\theoremstyle{plain}
\newtheorem{thm}{\protect\theoremname}
  \theoremstyle{definition}
  \newtheorem{defn}[thm]{\protect\definitionname}
  \theoremstyle{plain}
  \newtheorem{prop}[thm]{\protect\propositionname}
  \theoremstyle{plain}
  \newtheorem{lem}[thm]{\protect\lemmaname}
  \theoremstyle{remark}
  \newtheorem{rem}[thm]{\protect\remarkname}
\newtheorem{hyp}[thm]{\bf Hypothesis}
  \providecommand{\definitionname}{Definition}
  \providecommand{\lemmaname}{Lemma}
  \providecommand{\propositionname}{Proposition}
  \providecommand{\remarkname}{Remark}
\providecommand{\theoremname}{Theorem}
\begin{document}
\global\long\def\hu{H_{0}^{1}\left(U\right)}
\global\long\def\lu{L^{2}\left(U\right)}
\global\long\def\summ#1{{\displaystyle \sum_{#1=1}^{\infty}}}
\global\long\def\norm#1{\left\Vert #1\right\Vert }
\global\long\def\weak{\rightharpoonup}
\global\long\def\limm#1{{\displaystyle \lim_{#1\rightarrow\infty}}}
\global\long\def\supp#1#2{{\displaystyle \sup_{#1\in#2}}}
\global\long\def\inff#1#2{{\displaystyle \inf_{#1\in#2}}}
\global\long\def\var{\text{Var}}
\global\long\def\p#1#2{{\displaystyle \frac{\partial#1}{\partial#2}}}
\global\long\def\set#1#2{\left\{  #1\ \middle|\ #2\right\}  }
\global\long\def\pconv#1{\overset{#1}{\rightarrow}}
\global\long\def\rper{\left[-\pi,\pi\right]}
\global\long\def\mlam{\lambda_{M}}
\global\long\def\blosp{L^{2}\left(\rper;L^{2}\left(\mathbb{R}/2\pi\mathbb{Z}\right)\right)}
\global\long\def\lper{L^{2}\left(\mathbb{R}/2\pi\mathbb{Z}\right)}
\global\long\def\xirang{\left[-\frac{1}{2},\frac{1}{2}\right)}

\title{Nonlinear Instability of Periodic Traveling Waves}

\author{Connor Smith\thanks{Department of Mathematics, University of Kansas, 1460 Jayhawk Boulevard, Lawrence, KS 66045; c406s460@ku.edu.}}
\maketitle
\begin{abstract}
We study the local dynamics of $L^{2}\left(\mathbb{R}\right)$-perturbations
to the zero solution of spatially $2\pi$-periodic coefficient reaction-diffusion
systems. In this case the spectrum of the linearization about the
zero solution is purely essential and may be described via the point
spectrum of a one-parameter family of Bloch operators. When this essential
spectrum is unstable, we characterize a large class of initial perturbations
which lead to nonlinear instability of the trivial solution. This
is accomplished by using the Bloch transform to construct an appropriate
projection to capture the maximum amount of linear exponential growth
associated to the initial perturbation arising from the unstable eigenvalues
of the Bloch operators. This result is also extended to dissipative
systems of conservation laws.
\end{abstract}

\section{Introduction}

We consider a reaction-diffusion type system of real-valued differential
equations of the form

\begin{equation}
\left\{ \begin{aligned}u_{t}\left(x,t\right) & =Lu\left(x,t\right)+\mathcal{N}\left(u\left(x,t\right)\right)\\
u\left(x,0\right) & =u_{0}\left(x\right)
\end{aligned}
\right.\qquad\begin{array}{c}
x\in\mathbb{R},\ t>0\\
u\in\mathbb{R}^{d}
\end{array}\label{eq:Time Evolution Equation}
\end{equation}

posed on $X=H^{n}\left(\mathbb{R}\right)$, an appropriate Sobolev
subspace of $L^{2}\left(\mathbb{R}\right)$, where $L$ is an $2n$-th
order linear differential operator of the form

\begin{equation}
L=\sum_{j=0}^{2n}a_{j}\left(x\right)\partial_{x}^{j}\label{eq:L}
\end{equation}
with real-valued $2\pi$-periodic coefficient functions $a_{j}\left(x\right)$.
Other periods may be considered by rescaling $x$. For simplicity
we also assume that $L$ is a sectorial operator. We assume the nonlinear
operator $\mathcal{N}$ satisfies the following polynomial estimate,
\begin{equation}
\norm{\mathcal{N}\left(u\right)}_{X}\leq\norm u_{X}^{p}\qquad\text{for some }p>1.\label{eq:N Polynomial Estimate}
\end{equation}

Systems of the form \eqref{eq:Time Evolution Equation} naturally
arise when considering the local dynamics of constant coefficient
systems of reaction-diffusion systems near a spatially-periodic equilibrium
solution. That is, supposing an evolution equation $y_{t}=\mathcal{F}\left(y\right)$
has a spatially-periodic equilibrium solution $\phi$, with $\mathcal{F}\left(\phi\right)=0$,
we investigate the behavior of solutions that begin as small perturbations
of $\phi$, i.e. solutions with initial conditions of the form $y_{0}=\phi+u_{0}$.
Typically one writes the solution as $y\left(x,t\right)=\phi\left(x\right)+u\left(x,t\right)$,
substitutes this ansatz into $\mathcal{F}\left(y\right)$, then uses
the condition $\mathcal{F}\left(\phi\right)=0$ to find the ``perturbation
equations'' that govern the evolution of the perturbation $u$. In
this case equation \eqref{eq:Time Evolution Equation} would specifically
be this perturbation equation.

For the local dynamics, we use the following definition of stability,
\begin{defn}
Let $\phi$ be an equilibrium solution and $u$ be a perturbation
(as above) which satisfies \eqref{eq:Time Evolution Equation}. The
equilibrium solution $\phi$ is said to be \textit{stable} (in the
norm $\norm{\cdot}$) if for all $\epsilon>0$ there exists a $\eta>0$
so that requiring $\norm{u_{0}}<\eta$ ensures that $\norm{u\left(t\right)}<\epsilon$
for all time. Otherwise $\phi$ is said to be \textit{unstable}.
\end{defn}

Traditionally the focus has been showing that equilibrium solutions
are stable. In contrast, we are particularly interested in the case
when the $L^{2}\left(\mathbb{R}\right)$ spectrum of $L$, $\sigma\left(L\right)$,
is ``spectrally unstable'': when $\sigma\left(L\right)\cap\set{z\in\mathbb{C}}{\text{Re }z>0}\neq\emptyset$.
In \cite{henry_geometric_1993,jin_nonlinear_2019} it is shown that
spectral instability leads to instability. This is done by constructing
a specific initial perturbation $u_{0}$ which results in a poorly
behaved solution, thus precluding stability. In particular in \cite{jin_nonlinear_2019}
the initial perturbation $u_{0}$ is taken to be a perturbation which
``activates'' the most unstable part of $\sigma\left(L\right)$,
roughly speaking that it projects into the most unstable subspaces.

Where this paper differs is that we allow the initial perturbations
$u_{0}$ to be as arbitrary as possible and attempt to characterize
which could be used to preclude stability. To this end, we define
stability for an initial perturbation $u_{0}$ (in contrast to an
equilibrium solution $\phi$).

\begin{defn}
\label{def: Stability Initial Perturbation} An initial perturbation
$u_{0}$ of \eqref{eq:Time Evolution Equation} is said to be \textit{stable
}(in the norm $\norm{\cdot}$) if for all $\epsilon>0$ there exists
an $\eta>0$ so that for all $0<\delta<\eta$, if $u_{\delta}$ is
the solution to \eqref{eq:Time Evolution Equation} with initial perturbation
$\delta u_{0}$, then $\norm{u_{\delta}\left(t\right)}<\epsilon$
for all time. Otherwise $u_{0}$ is said to be \textit{unstable}.
\end{defn}

With this definition in mind, \cite{jin_nonlinear_2019} shows that
the initial perturbation which activates the rightmost part of $\sigma\left(L\right)$
is unstable. But what if an initial perturbation activates any other
part of $\sigma\left(L\right)\cap\set{z\in\mathbb{C}}{\text{Re }z>0}$?
A naive guess would be that if $u_{0}$ activates $\sigma\left(L\right)\cap\set{z\in\mathbb{C}}{\text{Re }z>0}$
it is unstable, and if it does not it is stable. If that were the
case, then one would obtain stability of $\phi$ for a wide range
of initial perturbations.

Our main result is a step in this direction with Theorem \ref{thm:Instability Theorem}
which concludes that if an initial perturbation activates an appropriate
subset of $\sigma\left(L\right)\cap\set{z\in\mathbb{C}}{\text{Re }z>0}$,
then that initial perturbation is unstable. This result applies to
many reaction-diffusion type systems with periodic equilibrium solutions,
including but not limited to scalar reaction diffusion, FitzHugh-Nagumo
\cite{rademacher_computing_2007}, the Klausmeier model for vegetation
stripe formulation \cite{sherratt_nonlinear_2007,sherratt_numerical_2013},
and the Belousov-Zhabotinskii reaction \cite{bordyugov_anomalous_2010}.
This methodology is robust enough that in Theorem \ref{thm:Applied-Problem-Instability-Theorem}
we show how it may be extended to dissipative systems of conservation
laws such as Kuramoto-Sivashinsky \cite{barker_nonlinear_2013,johnson_spectral_2015}
and the St. Venant equation \cite{barker_whitham_2010,rodrigues_periodic-coefficient_2016}.

To see how a spectral instability may lead to a nonlinear instability,
we recall from Duhamel's equation the solution of \eqref{eq:Time Evolution Equation}
can be decomposed as
\begin{equation}
u\left(x,t\right)=e^{Lt}u_{0}\left(x\right)+\int_{0}^{t}e^{L\left(t-s\right)}\mathcal{N}\left(u\left(x,s\right)\right)\ ds\,,\label{eq:Duhamel}
\end{equation}
which uses the solution semigroup $e^{Lt}$ to write the solution
in terms of a linear part and a nonlinear part.

The stability of the linear part is directly influenced by point spectrum
of $L$. Suppose $L$ had an eigenfunction $\psi$ with eigenvalue
$\lambda$ with $\text{Re }\lambda>0$: then choosing $\psi$ as an
initial perturbation, the linear part would be $e^{\lambda t}\psi$
and we would have exponential growth. While in general the $L^{2}\left(\mathbb{R}\right)$
spectrum of $L$ will not contain such eigenvalues, as the coefficient
functions $a_{j}\left(x\right)$ in \eqref{eq:L} were taken to be
$2\pi$-periodic then Floquet theory gives the $L^{2}\left(\mathbb{R}\right)$
spectrum of $L$ as the collection of $L_{\text{per}}^{2}[0,2\pi)$
point spectrum of the one-parameter family of operators $L_{\xi}=e^{-i\xi x}Le^{i\xi x}$
with $\xi\in\xirang$ \cite[Proposition 3.1]{johnson_stability_2013}.
The respective domains $L^{2}\left(\mathbb{R}\right)$ and $L_{\text{per}}^{2}[0,2\pi)$
are connected through the Bloch Transform. (This theory and its preliminaries
are developed in Subsection 2.1, and that specific spectral result
is given in Proposition \ref{prop:Bloch Spectral Decomposition}).
In Subsection 2.2 we use the Bloch Transform to define the projection
\eqref{eq:Pu} which allow us to use this unstable point spectrum
of $L_{\xi}$ to conclude exponential growth for the linear part of
\eqref{eq:Duhamel}.

This clarifies the notion of ``$u_{0}$ activating an unstable part
of $\sigma\left(L\right)$'' as ``the Bloch Transform of $u_{0}$
contains some sufficiently unstable eigenspace of some $L_{\xi}$.''
A further area of study would be to see if this Bloch transform view
gives any insight into specifically how the initial perturbation goes
unstable.

To handle the nonlinear part of \eqref{eq:Duhamel}, we apply the
reverse triangle inequality to obtain the following lower bound for
the solution,

\begin{equation}
\left|\norm{e^{Lt}u_{0}}_{X}-\norm{\int_{0}^{t}e^{L\left(t-s\right)}\mathcal{N}\left(u\left(x,s\right)\right)\ ds}_{X}\right|\leq\norm{u\left(x,t\right)}_{X}\,.\label{eq:Reverse Triangle Inequality}
\end{equation}

In Section 3 we prove an upper bound on the growth of the nonlinear
part, which when contrasted with the exponential growth of the linear
part gives instability.

\section{Spectral Properties}

\subsection{Characterization of the Spectrum}

The operator $L$ from \eqref{eq:Time Evolution Equation} is a linear
differential operator with $2\pi$-periodic coefficients, so standard
results in Floquet theory \cite[Section 2.4]{chicone_ordinary_2006}
tell us that there are no $L^{2}\left(\mathbb{R}\right)$ eigenfunctions:
the spectrum is entirely essential. Furthermore, the spectrum of $L$
can be determined from the following one-parameter family of Bloch
operators $L_{\xi}$,
\begin{equation}
L_{\xi}=e^{-i\xi x}Le^{i\xi x}\qquad\text{defined on }L_{\text{per}}^{2}[0,2\pi),\ \xi\in\xirang.\label{eq:Bloch Operator}
\end{equation}

Given the form of $L$ in \eqref{eq:L}, the Bloch operators take
the following explicit form,
\begin{equation}
L_{\xi}=\sum_{j=0}^{n}a_{j}\left(x\right)\left(\partial_{x}+i\xi\right)^{j}\,.\label{eq:L_xi}
\end{equation}

\begin{figure}[t]
\noindent \begin{centering}
\includegraphics[scale=0.8]{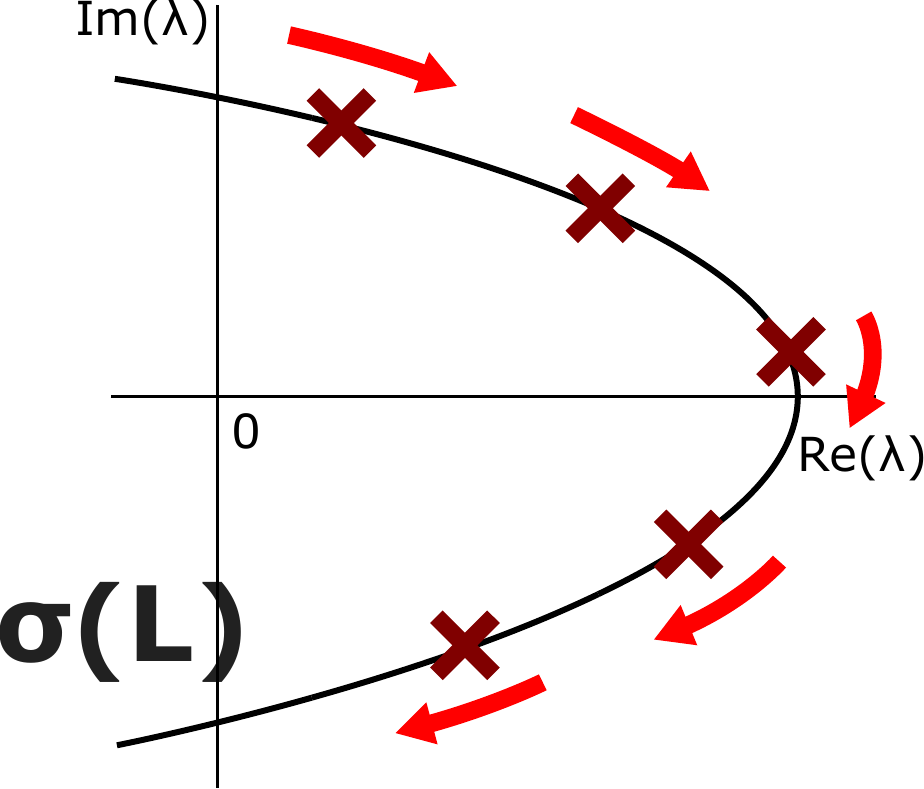}
\par\end{centering}

\caption{\label{fig:Moving-Eigenvalues} Eigenvalues $\lambda$ of $L_{\xi}$
given as maroon $x$'s. As $\xi$ changes, each individual eigenvalue
moves holomorphically with its path given in red. (It is artistic
license that the paths are  unidirectional.) The spectrum of $L$,
$\sigma\left(L\right)$, is graphed in black. The union of all of
these eigenvalues $\lambda$ forms $\sigma\left(L\right)$.}

\end{figure}

\begin{prop}
\cite[Proposition 3.1]{johnson_stability_2013} \label{prop:Bloch Spectral Decomposition}
Consider the operator $L$ as in \eqref{eq:L} acting on $L^{2}\left(\mathbb{R}\right)$
with domain $H^{1}\left(\mathbb{R}\right)$ and the associated Bloch
operators $\left\{ L_{\xi}\right\} _{\xi\in\xirang}$ acting on $L_{\text{per}}^{2}[0,2\pi)$
with domain $H_{\text{per}}^{1}[0,2\pi)$. Then $\lambda$ is in the
$L^{2}\left(\mathbb{R}\right)$ spectrum of $L$ if and only if there
exists some $\xi\in\xirang$ so that $\lambda$ is in the $L_{\text{per}}^{2}[0,2\pi)$
spectrum of $L_{\xi}$ with an eigenfunction of the form $e^{i\xi x}v\left(x\right)$
with $v\in H_{\text{per}}^{1}[0,2\pi)$.
\end{prop}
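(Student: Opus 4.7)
The plan is to use the Bloch transform to write $L$ as a direct integral of the fiber operators $L_{\xi}$ and then apply the general spectral theory of decomposable operators, together with the fact that each $L_{\xi}$ acts on a compact-domain space and hence has purely discrete spectrum.

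First I would recall the Bloch transform
\begin{equation*}
\mathcal{B}\colon L^{2}(\mathbb{R})\;\longrightarrow\;L^{2}\bigl(\xirang;\,\lper\bigr),
\qquad (\mathcal{B}u)(\xi,x)=\summ{k}\hat{u}(\xi+k)\,e^{ikx}\quad(\text{formally}),
\end{equation*}
and establish via Plancherel and periodization that $\mathcal{B}$ is an isometric isomorphism. The key algebraic identity to check is the conjugation formula $\mathcal{B}L\mathcal{B}^{-1}=\int_{\xirang}^{\oplus}L_{\xi}\,d\xi$, equivalently $(\mathcal{B}Lu)(\xi,\cdot)=L_{\xi}(\mathcal{B}u)(\xi,\cdot)$ for $u$ in the domain, which follows by observing that multiplication by $e^{i\xi x}$ is the intertwiner between $\partial_{x}+i\xi$ and $\partial_{x}$ and by comparing with the explicit representation \eqref{eq:L_xi}.

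Once $L$ is realized as the direct integral of $\{L_{\xi}\}_{\xi\in\xirang}$, the forward direction is immediate: the spectrum of a decomposable operator is (the essential support of) the union of the fiber spectra. Since each $L_{\xi}$ is an elliptic differential operator on the compact domain $[0,2\pi)$ with periodic boundary conditions, it has compact resolvent, so its $\lper$ spectrum is purely point spectrum consisting of eigenvalues. The analytic dependence of $L_{\xi}$ on $\xi$ (via \eqref{eq:L_xi}) together with standard perturbation theory (Kato) makes the branches of eigenvalues continuous in $\xi$, so the union $\bigcup_{\xi}\sigma(L_{\xi})$ is closed and coincides with $\sigma(L)$. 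If $\lambda\in\sigma(L)$ then $\lambda\in\sigma(L_{\xi_{\ast}})$ for some $\xi_{\ast}\in\xirang$, and any corresponding periodic eigenfunction $v$ gives the desired $e^{i\xi_{\ast}x}v(x)$.

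The main obstacle, and what I would spend the most care on, is the converse direction, since one needs to show that a periodic eigenfunction of $L_{\xi}$ really produces a point of the $L^{2}(\mathbb{R})$ essential spectrum of $L$, despite $e^{i\xi x}v(x)$ itself failing to lie in $L^{2}(\mathbb{R})$. The cleanest route is a Weyl singular-sequence argument: fix a smooth cutoff $\chi\in C_{c}^{\infty}(\mathbb{R})$ with $\chi\equiv 1$ on $[-1,1]$ and set
\begin{equation*}
u_{n}(x)=\frac{\chi(x/n)\,e^{i\xi x}v(x)}{\norm{\chi(\cdot/n)\,e^{i\xi\cdot}v}_{L^{2}(\mathbb{R})}}.
\end{equation*}
Then $\norm{u_{n}}_{L^{2}(\mathbb{R})}=1$, and since $L_{\xi}v=\lambda v$ one has $L(e^{i\xi x}v)=\lambda e^{i\xi x}v$ pointwise; applying the Leibniz rule, $(L-\lambda)u_{n}$ is a sum of terms each carrying at least one derivative of $\chi(\cdot/n)$, whose sup-norms are $O(1/n)$ and whose supports lie in the annular region $n\le |x|\le Cn$. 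Because the normalizing factor grows like $n^{1/2}$ (the mass of $v$ on $[-n,n]$ scales linearly with $n$ by periodicity), one checks $\norm{(L-\lambda)u_{n}}_{L^{2}(\mathbb{R})}\to 0$, exhibiting $\lambda$ as an element of $\sigma(L)$ via Weyl's criterion. The only real care needed is in bookkeeping the lower-order coefficients $a_{j}(x)$ in \eqref{eq:L}, whose boundedness is guaranteed by $2\pi$-periodicity and sectoriality of $L$.
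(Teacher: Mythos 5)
The paper offers no proof of this proposition: it is imported verbatim as \cite[Proposition 3.1]{johnson_stability_2013}, so there is no in-text argument to compare yours against. On its own merits, your argument is the standard Floquet--Bloch proof and is essentially correct: the conjugation $\mathcal{B}L\mathcal{B}^{-1}=\int^{\oplus}L_{\xi}\,d\xi$ for one inclusion, and a Weyl singular sequence of truncated Bloch waves for the other, with the $n^{1/2}$ normalization bookkeeping done correctly (note that the implication ``singular sequence $\Rightarrow\lambda\in\sigma(L)$'' holds for any closed operator, so no self-adjointness is needed there). One step deserves more care than you give it. Since $L$ is not self-adjoint, the statement ``the spectrum of a decomposable operator is the (essential closure of the) union of the fiber spectra'' is not automatic: for non-normal fibers, $\lambda\notin\bigcup_{\xi}\sigma(L_{\xi})$ does not by itself exclude $\lambda\in\sigma(L)$, because the fiber resolvents could have unbounded pseudospectral growth in $\xi$. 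What you actually need is $\sup_{\xi}\bigl\Vert(L_{\xi}-\lambda)^{-1}\bigr\Vert<\infty$ whenever $\lambda$ avoids every fiber spectrum; this does hold here because $\xi\mapsto L_{\xi}$ is a holomorphic family of type (A) (the $\xi$-dependence in \eqref{eq:L_xi} is polynomial and relatively bounded), hence norm-resolvent continuous on the compact parameter set $\xirang$, but it is this uniform bound---not the closedness of the union of eigenvalue branches---that delivers the implication $\lambda\in\sigma(L)\Rightarrow\lambda\in\sigma(L_{\xi_{*}})$ for some $\xi_{*}$. A second, minor omission: to apply the $2n$-th order operator $L$ to $\chi(x/n)e^{i\xi x}v(x)$ you need $v$ to have $2n$ locally square-integrable derivatives; this follows from elliptic regularity applied to $L_{\xi}v=\lambda v$, but should be recorded, since the stated domain $H_{\text{per}}^{1}[0,2\pi)$ does not supply it directly.
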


As the resolvent of each $L_{\xi}$ with domain $H_{\text{per}}^{1}[0,2\pi)$
is a compact operator, then the spectrum of $L_{\xi}$ is a countable
set of isolated eigenvalues with finite multiplicity \cite{evans_partial_1998}.
Note that from \eqref{eq:L_xi}, $\xi$ appears in $L_{\xi}$ as a
polynomial and so $L_{\xi}$ is holomorphic as a function of $\xi$,
and thus \cite[Theorem 1.7 from VII-§1]{kato_perturbation_1995} given
a closed curve $\Gamma$ that separates the spectrum, its corresponding
spectral projection is holomorphic in $\xi$ and \cite[Theorem 1.8 from VII-§1]{kato_perturbation_1995}
any finite system of eigenvalues which depend holomorphically on $\xi$.
See Figure \ref{fig:Moving-Eigenvalues} for a depiction of the spectral
picture. %

In particular, let $\lambda$ be an eigenvalue of $L_{\xi_{0}}$ and
$\Gamma$ be a curve that contains $\lambda$ and no other eigenvalue
of $L_{\xi_{0}}$. Then there is some interval $I\subset\xirang$,
with $\xi_{0}\in I$, so that the following spectral projection

\begin{equation}
\tilde{P}_{\lambda}\left(\xi\right)=\frac{1}{2\pi i}\int_{\Gamma}R\left(\zeta,L_{\xi}\right)\ d\zeta\label{eq:P-tilde}
\end{equation}
is holomorphic on $I$, where $R\left(\zeta,L_{\xi}\right)$ is the
resolvent of $L_{\xi}$.

The Bloch transform may be used to relate the domain of the Bloch
operators $\left\{ L_{\xi}\right\} _{\xi\in\xirang}$ to the domain
of $L$. To explain the former domain, first fix $\xi$ and consider
$g\left(\xi,\cdot\right)\in\mathcal{D}\left(L_{\xi}\right)=H_{\text{per}}^{1}[0,2\pi)$.
The Bloch transform requires the map $\xi\in\xirang\mapsto g\left(\xi,\cdot\right)\in H_{\text{per}}^{1}[0,2\pi)$
be $L^{2}\left(\xirang;H_{\text{per}}^{1}[0,2\pi)\right)$, identifying
this as the domain of the Bloch operators $\left\{ L_{\xi}\right\} _{\xi\in\xirang}$
. We define the Bloch transform of $f\in L^{2}\left(\mathbb{R}\right)$
to be the unique function $\check{f}\in L^{2}\left(\xirang;L_{\text{per}}^{2}[0,2\pi)\right)$
which satisfies

\begin{equation}
f\left(x\right)=\int_{-1/2}^{1/2}\check{f}\left(\xi,x\right)e^{i\xi x}\ d\xi\,.\label{eq:Bloch Transform}
\end{equation}

We have uniqueness because there is an explicit formula for $\check{f}$;
starting from the Fourier inversion formula, if we break the integral
into blocks of the form $\left[j-1/2,j+1/2\right]$ with $j\in\mathbb{Z}$,
\[
f\left(x\right)=\frac{1}{2\pi}\int_{\mathbb{R}}\hat{f}\left(\xi\right)e^{i\xi x}\ d\xi=\int_{-1/2}^{1/2}\left(\sum_{j\in\mathbb{Z}}\hat{f}\left(\xi+j\right)e^{ijx}\right)e^{i\xi x}\ d\xi\,.
\]

Then explicitly,
\[
\check{f}\left(\xi,x\right)=\sum_{j\in\mathbb{Z}}\hat{f}\left(\xi+j\right)e^{ijx}\,.
\]

From Parseval's theorem we see that the Bloch transform is an isometry,
\begin{equation}
\norm f_{L^{2}\left(\mathbb{R}\right)}^{2}=2\pi\int_{-1/2}^{1/2}\norm{\check{f}\left(\xi,\cdot\right)}_{L^{2}\left(\mathbb{R}\backslash2\pi\mathbb{Z}\right)}^{2}\ d\xi\,.\label{eq:Bloch Transform Isometry}
\end{equation}

We can also use the Bloch transform to write the linear evolution
$e^{Lt}$ in terms of the linear evolution of the Bloch operators
\cite[Equation 1.15]{johnson_nonlinear_2010}. Specifically, given
an $f_{0}\in L^{2}\left(\mathbb{R}\right),$ we have

\begin{equation}
e^{Lt}f_{0}\left(x\right)=\frac{1}{2\pi}\int_{-1/2}^{1/2}e^{i\xi x}e^{L_{\xi}t}\check{f_{0}}\left(\xi,x\right)\ d\xi\,.\label{eq: L_xi Semigroup}
\end{equation}

\subsection{Bloch-Space Projections}

Our first goal is to use the unstable spectrum of $L$ to show that
the linear part of \eqref{eq:Duhamel} has some sort of exponential
growth. We assume that the spectrum of $L$ is unstable, so let $\lambda\in\sigma\left(L\right)\cap\set{z\in\mathbb{C}}{\text{Re }z>0}$.
In Proposition \ref{prop:Bloch Spectral Decomposition} we characterized
the $L^{2}\left(\mathbb{R}\right)$ spectrum of $L$ in terms of the
$L_{\text{per}}^{2}[0,2\pi)$ eigenvalues of the one-parameter family
of Bloch operators $L_{\xi}$: there must be some $\xi_{0}\in\xirang$
so that $\lambda$ is an eigenvalue of $L_{\xi_{0}}$ with eigenfunction
$\phi\left(\xi_{0},x\right)$. Note that $\phi$ when considered as
an initial perturbation has exponential growth as $e^{L_{\xi_{0}}t}\phi=e^{\lambda t}\phi$,
albeit in $L_{\text{per}}^{2}[0,2\pi)$. We use the Bloch Transform
\eqref{eq:Bloch Transform} to extend $\phi$ into some function in
$L^{2}\left(\mathbb{R}\right)$ that has exponential growth.

First we extend $\phi\left(\xi,x\right)$ to more $\xi$ values than
just $\xi_{0}$. Using the spectral projection around a single eigenvalue
of $L_{\xi_{0}},$ $\tilde{P}_{\lambda}\left(\xi\right)$ introduced
in \eqref{eq:P-tilde}, for $\xi$ in some interval $I$ we may continuously
define $\phi\left(\xi,x\right)=\tilde{P}_{\lambda}\left(\xi\right)\phi\left(\xi_{0},x\right)$.
We restrict the contour $\Gamma$ and interval $I$ to be sufficiently
small so that 
\begin{equation}
\beta=\inf\text{Re }\Gamma>0\ ,\label{eq:Prepared Initial Data Beta}
\end{equation}

then we can use the Bloch Transform to define the following function
in $L^{2}\left(\mathbb{R}\right)$,

\begin{equation}
f_{0}\left(x\right)=\frac{1}{2\pi}\int_{I}e^{i\xi x}\phi\left(\xi,x\right)\ d\xi\ .\label{eq:O'Henry Unstable Prepared Initial Data}
\end{equation}

As each $\phi\left(\xi,x\right)$ is a linear combination of eigenfunction
of $L_{\xi}$ with eigenvalues $\lambda'$ that satisfy $\text{Re}\lambda'>\beta$,
then $\norm{e^{L_{\xi}t}\phi\left(\xi,x\right)}_{L_{\text{per}}^{2}[0,2\pi)}>e^{\beta t}\norm{\phi\left(\xi,x\right)}_{L_{\text{per}}^{2}[0,2\pi)}$.
Hence when using \eqref{eq:Bloch Transform Isometry} and \eqref{eq: L_xi Semigroup}
we have the estimate that
\begin{eqnarray*}
\norm{e^{Lt}f_{0}}_{L^{2}\left(\mathbb{R}\right)}^{2} & = & \frac{1}{2\pi}\int_{I}\norm{e^{\lambda_{i}\left(\xi\right)t}\phi_{i}\left(\xi,\cdot\right)}_{L^{2}\left(\mathbb{R}\backslash2\pi\mathbb{Z}\right)}^{2}\\
 & \geq & \frac{1}{2\pi}e^{\beta t}\inf_{\xi\in I}\norm{\phi_{i}\left(\xi,\cdot\right)}_{L^{2}\left(\mathbb{R}\backslash2\pi\mathbb{Z}\right)}^{2}\,.
\end{eqnarray*}

\begin{figure}[t]
\hfill{}\includegraphics[scale=0.8]{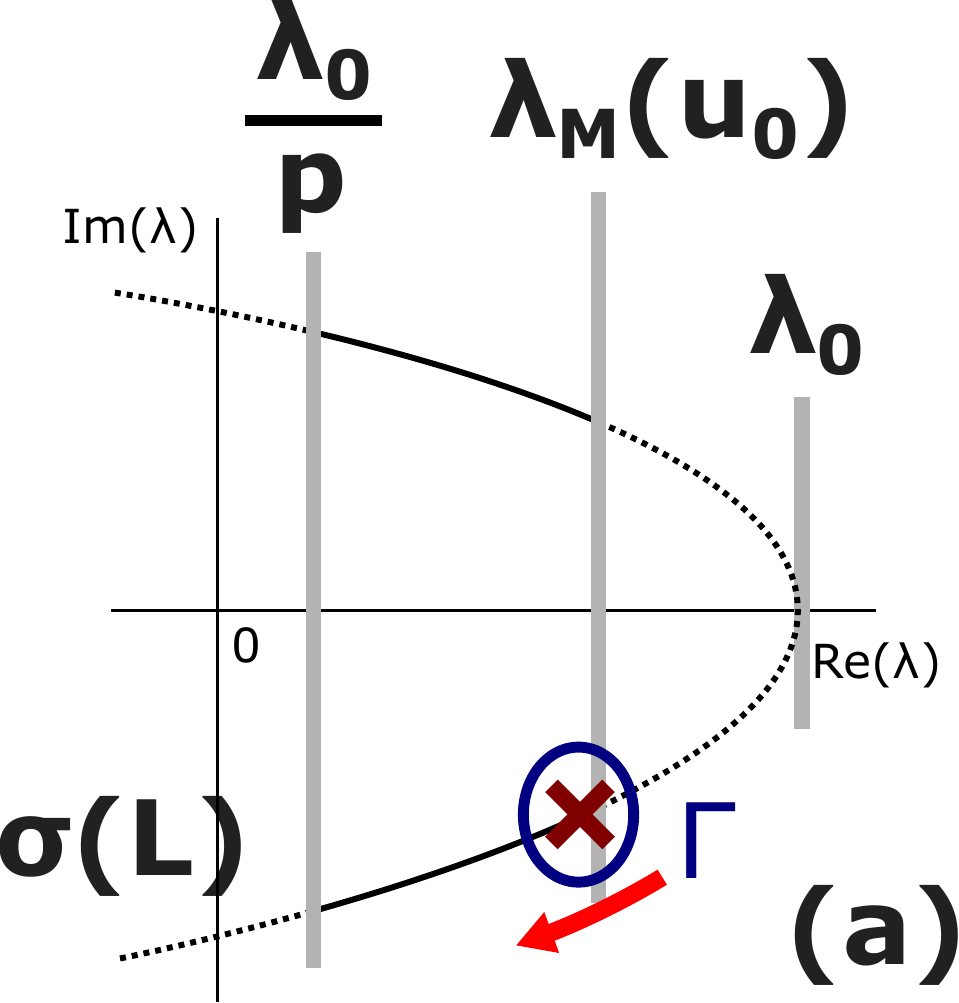}\hfill{}\includegraphics[scale=0.8]{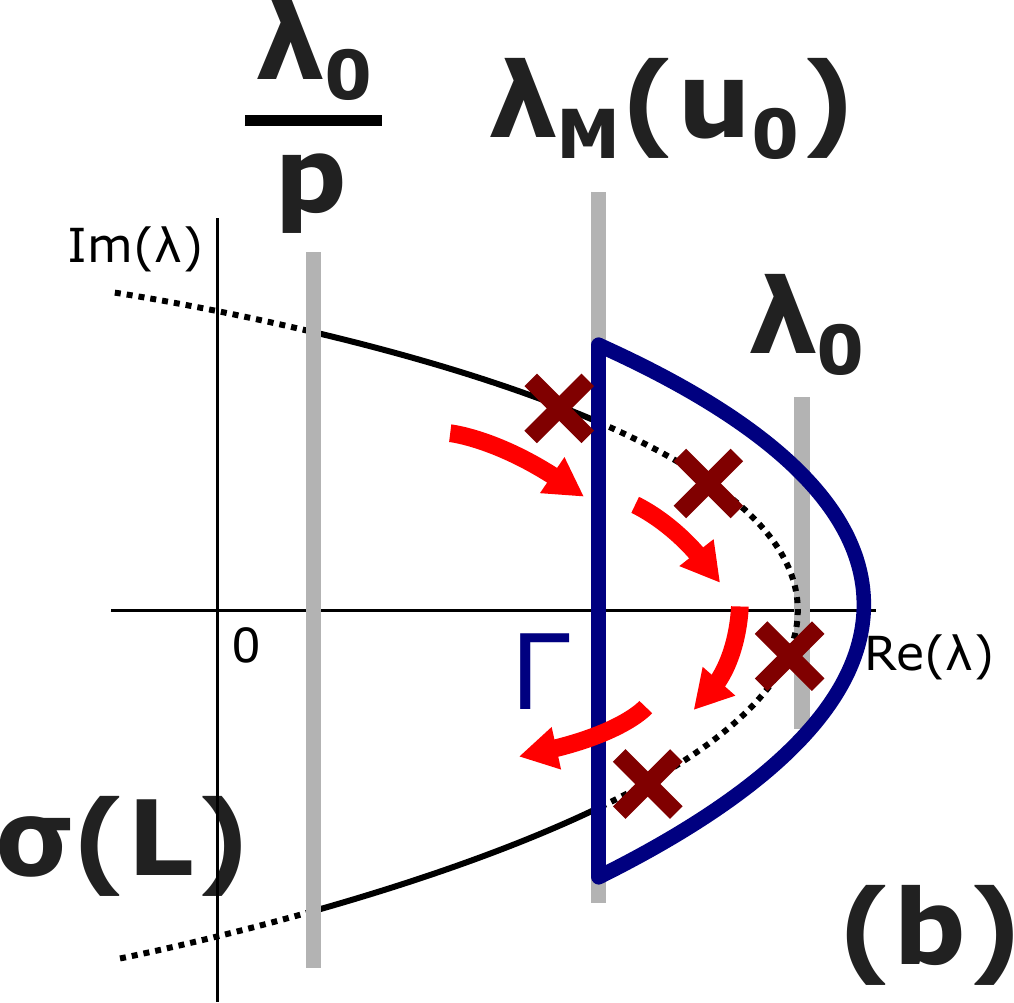}\hfill{}

\caption{\label{fig:Contours-Figure}The spectrum of $L$, $\sigma\left(L\right)$,
with the lines $\text{Re }z=\frac{\lambda_{0}}{p}$, $\text{Re }z=\lambda_{M}\left(u_{0}\right)$,
and $\text{Re }z=\lambda_{0}$ shown. (a) The contour $\Gamma$ chosen
for $P$ which is chosen close to an eigenvalue $\lambda\left(\xi_{0}\right)$,
which in turn is close to $\lambda_{M}$. The interval $I$ is chosen
sufficiently small so that no other eigenvalues enter the region enclosed
by $\Gamma$. (b) The contour $\Gamma$ chosen for $P'$. Note that
as $\xi$ varies, eigenvalues may enter or exit the region enclosed
by $\Gamma$: Hypothesis \ref{hyp: Finitely Many Indices} claims
that this happens finitely many times, so we may consider finitely
many $\xi$-intervals $I_{j}$ where the number of eigenvalues (counted
by multiplicity) enclosed by $\Gamma$ is a constant.}
\end{figure}

The intuition behind this is constructing an initial perturbation
that is close to the eigenfunction $\phi\left(\xi_{0},x\right)$.
In the sequel our strategy changes to instead defining a projection
$P$ that recognizes when such ``eigenfunctions'' are present in
an initial perturbation $u_{0}$. This has the advantage of being
widely applicable to all initial perturbations rather than just a
constructed few.

As a technical issue we require any such ``eigenfunctions'' to have
a sufficient level of exponential growth. To that end we first define
the following quantity which will be the maximum rate of exponential
growth an initial perturbation $u_{0}$ contains,

\begin{equation}
\lambda_{M}\left(u_{0}\right)=\sup\set{\text{Re }\lambda}{\tilde{P}_{\lambda}\left(u_{0}\right)^{\vee}\neq0,\lambda\in\sigma\left(L_{\xi}\right)\text{ for any }\xi\in\xirang}\,,\label{eq:Lambda_M}
\end{equation}

where $\tilde{P}_{\lambda}$ is a spectral projection to the eigenspace
of $\lambda$ as defined in equation \eqref{eq:P-tilde}. Note that
the condition in \eqref{eq:Lambda_M} is analogous to requiring ``$\lambda\in\sigma\left(L\right)$,''
but accounts for the technicality that if $\lambda$ is an eigenvalue
for multiple $L_{\xi}$ then $\tilde{P}_{\lambda}$ as defined in
\eqref{eq:P-tilde} is not unique.

We will construct this projection $P$ analogously to \eqref{eq:O'Henry Unstable Prepared Initial Data}:
as $\phi\left(\xi_{0},x\right)$ was extended to $\phi\left(\xi,x\right)$
by prepending $\tilde{P}_{\lambda}$, we shall do so here as well.
We choose our $\tilde{P}_{\lambda}$ so that $\lambda$ is arbitrarily
close to $\lambda_{M}$. That is, let $\epsilon>0$ with $0<\lambda_{M}-\epsilon<\lambda_{M}$.
Then choose an eigenvalue $\lambda$ of $L_{\xi_{0}}$ with $\tilde{P}_{\lambda}\left(u_{0}\right)^{\vee}\neq0$
so that $\lambda_{M}-\text{Re }\lambda<\frac{\epsilon}{2}$, and restrict
the contour $\Gamma$ and interval $I$ containing $\xi_{0}$ so that
$\lambda_{M}-\inf\ \text{Re }\Gamma<\epsilon$ and $\tilde{P}_{\lambda}\left(\xi\right)$
is holomorphic on $I$. See Figure \ref{fig:Contours-Figure} (a)
for an illustration. We may then use the Bloch Transform \eqref{eq:Bloch Transform}
to define the following projection,

\begin{equation}
Pu\left(x\right)=\int_{I}e^{i\xi x}\tilde{P}_{\lambda}\left(\xi\right)\check{u}\left(\xi,x\right)\ d\xi\,.\label{eq:Pu}
\end{equation}

To see that $P$ is a projection, first note that by the uniqueness
of \eqref{eq:Bloch Transform} we see that $\left(Pu\right)^{\vee}=\tilde{P}_{\lambda}\check{u}$,
and so $P^{2}u=Pu$. Secondly, applying \ref{eq:Bloch Transform Isometry}
and that $\norm{\tilde{P}_{\lambda}}_{L_{\text{per}}^{2}[0,2\pi)}\leq1$
gives that $\norm P_{L^{2}\left(\mathbb{R}\right)}\leq1$. Furthermore
the spectral projection $\tilde{P}_{\lambda}\left(\xi\right)$ commutes
with the semigroup $e^{L_{\xi}t}$ for each $\xi$ \cite[Theorem 3.14.10]{staffans_well-posed_2005},
which we can use to show that $P$ commutes with $e^{Lt}$ as well.
\begin{lem}
\label{lem:Linear Growth-1} Suppose $u_{0}$ is an initial perturbation
to \eqref{eq:Time Evolution Equation}. Then there exists some constant
$C>0$ depending only on $u_{0}$ so that for any $\omega<\lambda_{M}$
sufficiently close to $\lambda_{M}$ defined in \eqref{eq:Lambda_M},
we have the linear growth estimate
\[
C\delta e^{\omega t}\leq\norm{e^{Lt}\delta u_{0}}_{X}\,.
\]
\end{lem}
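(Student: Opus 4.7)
The plan is to exploit the projection $P$ from \eqref{eq:Pu} to isolate the exponentially growing piece of $e^{Lt}u_0$, and then transfer a finite-dimensional lower bound through the Bloch isometry. Given $\omega < \mlam$ sufficiently close to $\mlam$, I will set $\epsilon = \mlam - \omega$ (or a suitable fraction thereof) and construct $P$, together with the associated eigenvalue $\lambda$, contour $\Gamma$, and interval $I$ as in Section~2.2, arranged so that $\beta := \inf\operatorname{Re}\Gamma$ satisfies $\beta > \omega$. Because $\tilde{P}_{\lambda}(\xi)$ is the spectral projection for $L_{\xi}$ associated with eigenvalues inside $\Gamma$, it commutes fiberwise with $e^{L_{\xi}t}$, and this lifts through \eqref{eq: L_xi Semigroup} and the uniqueness of the Bloch transform to give $Pe^{Lt} = e^{Lt}P$. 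Combining this with $\norm{P}_{L^{2}(\mathbb{R})} \leq 1$ and $\norm{\cdot}_{X} \geq \norm{\cdot}_{L^{2}(\mathbb{R})}$ yields
\[
\norm{e^{Lt}\delta u_{0}}_{X} \;\geq\; \norm{Pe^{Lt}\delta u_{0}}_{L^{2}(\mathbb{R})} \;=\; \delta\,\norm{e^{Lt}Pu_{0}}_{L^{2}(\mathbb{R})},
\]
so it suffices to bound the right-hand side below by $Ce^{\omega t}$.

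Applying the Bloch isometry \eqref{eq:Bloch Transform Isometry} to $Pu_{0}$, whose Bloch transform equals $\tilde{P}_{\lambda}(\xi)\check{u_{0}}(\xi,\cdot)$ supported on $I$, the problem reduces, up to a fixed constant, to bounding
\[
\int_{I}\norm{e^{L_{\xi}t}\,\tilde{P}_{\lambda}(\xi)\check{u_{0}}(\xi,\cdot)}_{\lper}^{2}\,d\xi
\]
from below by a multiple of $e^{2\omega t}$. For each $\xi \in I$, the vector $v_{\xi} := \tilde{P}_{\lambda}(\xi)\check{u_{0}}(\xi,\cdot)$ lies in the finite-dimensional invariant subspace $V_{\xi} := \operatorname{range}\tilde{P}_{\lambda}(\xi)$, on which $L_{\xi}$ acts as a matrix whose spectrum is contained in the region enclosed by $\Gamma$ and therefore has real part at least $\beta > \omega$. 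For any such finite-dimensional operator $A$ with $\min\operatorname{Re}\sigma(A) > \omega$, the elementary bound $\norm{e^{At}v} \geq c_{A}e^{\omega t}\norm{v}$ holds: writing $e^{At} = e^{\omega t}e^{(A-\omega I)t}$, the inverse semigroup $e^{-(A-\omega I)t}$ has generator with spectrum in $\{\operatorname{Re}z \leq -(\beta-\omega)<0\}$, hence is uniformly bounded in $t \geq 0$ despite any polynomial (Jordan) factors.

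To close the argument I need both $\norm{v_{\xi}}_{\lper}$ and the fiberwise constant $c(\xi)$ bounded below uniformly on a set of positive measure. For the former, the definition of $\mlam$ together with our choice of $\lambda$ ensures $v_{\xi}$ does not vanish identically on $I$; by continuity of $\xi \mapsto v_{\xi}$ in $\lper$, inherited from the holomorphicity of $\tilde{P}_{\lambda}(\xi)$ and continuity of $\check{u_{0}}(\xi,\cdot)$, there is a compact subinterval $I' \subset I$ on which $\norm{v_{\xi}}_{\lper} \geq c_{0} > 0$. For the latter, one observes that $\sup_{t\geq 0}\norm{e^{-(L_{\xi}-\omega I)t}\big|_{V_{\xi}}}$ is finite for each $\xi \in I'$ (spectrum strictly in the left half-plane after shifting) and upper semicontinuous in $\xi$ by holomorphic dependence of $L_{\xi}|_{V_{\xi}}$, hence bounded on compact $I'$. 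Integrating the fiberwise lower bound over $I'$ yields $\norm{e^{Lt}Pu_{0}}_{L^{2}(\mathbb{R})}^{2} \geq C^{2}e^{2\omega t}$, which completes the proof.

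The principal technical obstacle is the uniform control of $c(\xi)$ across $I'$: the Jordan block structure of $L_{\xi}|_{V_{\xi}}$ can jump at isolated exceptional points of $I$, so a single diagonalization argument is unavailable. This is bypassed precisely by the shift-and-invert trick above, which replaces a delicate eigenvector-level estimate by a uniform bound on the shifted semigroup, something that only requires spectral separation and the holomorphic framework already established in Section~2.1.
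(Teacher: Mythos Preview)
Your proof follows essentially the same route as the paper's: project with $P$, reduce fiber-by-fiber via the Bloch isometry, and invoke the spectral lower bound on each finite-dimensional range $V_\xi$. You are in fact more careful than the paper, which simply asserts the fiberwise growth $\norm{e^{L_\xi t}}_{\lper} > e^{\omega t}$ on the projected subspace without addressing possible Jordan structure; your shift-and-invert argument (bounding $e^{-(L_\xi-\omega I)t}\big|_{V_\xi}$ uniformly in $t$ via its strictly stable spectrum) is the clean way to make that step rigorous and to obtain the uniform-in-$\xi$ constant. One small repair: you invoke ``continuity of $\check{u_0}(\xi,\cdot)$'' in $\xi$, which need not hold for $u_0\in L^2(\mathbb{R})$; however, all you actually need is that $\norm{v_\xi}_{\lper}$ is bounded below on a set of positive $\xi$-measure, and this follows immediately from $\tilde{P}_\lambda\check{u_0}\neq 0$ as an element of $L^2\big(I;\lper\big)$, so the argument goes through unchanged after replacing the compact subinterval $I'$ by a measurable subset of positive measure.
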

\begin{proof}
As $P$ is a projection, then
\[
\norm{Pu}_{X}\leq\norm u_{X}\,.
\]

So it suffices to show this exponential growth for the projected linear
part $Pe^{Lt}\delta u_{0}$. Set $\epsilon=\lambda_{M}-\omega$ and
recall the choice of $\lambda$ in \eqref{eq:Pu}, restricting the
interval $I$ so that $\tilde{P}_{\lambda}\left(\xi\right)\left(u_{0}\right)^{\vee}\neq0$
for all $\xi\in I$. As $\tilde{P}_{\lambda}\left(\xi\right)\left(u_{0}\right)^{\vee}$
is a linear combination of eigenfunctions of $L_{\xi}$ with eigenvalues
$\lambda'$ that satisfy $\text{Re }\lambda'>\omega$, $\norm{e^{L_{\xi}t}}_{L_{\text{per}}^{2}[0,2\pi)}>e^{\omega t}$,
and after applying \eqref{eq:Bloch Transform Isometry},
\begin{eqnarray*}
\norm{Pe^{Lt}\delta u_{0}}_{X} & \geq & \int_{I}\norm{e^{L_{\xi}t}\tilde{P}\left(\xi\right)\left(\delta u_{0}\right)^{\vee}\left(\xi,\cdot\right)\ e^{i\xi\cdot}}_{L_{\text{per}}^{2}[0,2\pi)}\ d\xi\\
 & \geq & \delta e^{\omega t}\int_{I}\inf_{\xi\in I}\left(\norm{\tilde{P}\left(\xi\right)\left(u_{0}\right)^{\vee}\left(\xi,\cdot\right)}_{L_{\text{per}}^{2}[0,2\pi)}\right)\ d\xi\,.
\end{eqnarray*}

\end{proof}

Our instability argument requires that $\lambda_{M}$ be sufficiently
large to attain a certain minimum level of exponential growth. To
define this level, we first need to determine an upper bound of $\lambda_{M}\left(u_{0}\right)$
over all choices of initial perturbations $u_{0}$, 
\begin{equation}
\lambda_{0}=\sup\set{\text{Re }\lambda}{\lambda\in\sigma\left(L\right)}\,.\label{eq:Lambda_0}
\end{equation}

Note that as $L$ was assumed to be sectorial, then $\lambda_{0}$
is necessarily finite. As part of the upcoming Hypothesis \ref{hyp: Finitely Many Indices}
we assume that this quantity is finite. We later determine in Theorem
\ref{thm:Instability Theorem} that a sufficient level of exponential
growth is attained if $\frac{\lambda_{0}}{p}<\lambda_{M}$, where
$p$ is the power of the nonlinearity as in equation \eqref{eq:N Polynomial Estimate}.
Put another way, if we define
\[
\Sigma_{U}=\sigma\left(L\right)\cap\set{\text{Re}\left(z\right)>\frac{\lambda_{0}}{p}}{z\in\mathbb{C}}
\]

then there is a sufficient amount of exponential growth if for some
$\lambda\in\Sigma_{U}$, $\lambda$ an eigenvalue of $L_{\xi_{0}}$,
we have that $\tilde{P}_{\lambda}\left(u_{0}\right)^{\vee}\neq0$.
This is what is precisely meant by saying $u_{0}$ ``activates''
the unstable part of the spectrum. We now make a hypothesis on $\Sigma_{U}$,
that eigenvalues do not enter and exit it too many times.

\begin{hyp} \label{hyp: Finitely Many Indices}

\textit{For each initial perturbation $u_{0}$ of \eqref{eq:Time Evolution Equation}
with $\frac{\lambda_{0}}{p}<\lambda_{M}\left(u_{0}\right)$, there
is a finite partition of $\xirang$ into intervals $I_{j}$ so that
the number of eigenvalues of $L_{\xi}$ (defined in \eqref{eq:L_xi},
counted by multiplicity) in}

\textit{
\[
A_{u_{0}}=\Sigma_{U}\cap\set{z\in\mathbb{C}}{\text{Re }z>\lambda_{M}\left(u_{0}\right)}
\]
 is constant for $\xi\in I_{j}$. }

\textit{}%

\textit{For a visual depiction of this latter assumption, see Figure
\ref{fig:Contours-Figure} (b), where eigenvalues are allowed to enter
and exit a contour $\Gamma$ enclosing only $A_{u_{0}}$ --- and no
other part of $\sigma\left(L\right)$ --- only finitely many times.
In \cite[Figure 6]{gani_instability_2015} \cite[Figure 3]{sherratt_numerical_2012}
a numerical calculation of the point spectrum of $L_{\xi}$ appears
to agree with this hypothesis.}

\end{hyp}

As it stands, a naive exponential growth upper bound for $u_{0}$
--- that is, obtained solely by looking at the spectrum of $L$ ---
would be $e^{\lambda_{0}t}$. If we bound the nonlinear part in \eqref{eq:Duhamel}
by this exponential function $e^{\lambda_{0}t}$, then it would overshadow
the lesser growth $e^{\left(\lambda_{M}-\epsilon\right)t}$ that \eqref{eq:Pu}
can provide for the linear part. But we can take advantage of the
fact that for all $\lambda\in\sigma\left(L\right)$ with $\text{Re }\lambda>\lambda_{M}$,
then $\tilde{P}_{\lambda}u_{0}=0$ (for any choice of $\tilde{P}_{\lambda}$).
Thus intuitively $u_{0}$ should ``ignore'' that part of the spectrum,
and the exponential growth upper bound should instead be $e^{\lambda_{M}t}$. 

\begin{lem}
\label{lem:Linear Growth-2} Suppose that \eqref{eq:Time Evolution Equation}
satisfies Hypothesis \ref{hyp: Finitely Many Indices} and let $u_{0}$
be an initial perturbation with $\frac{\lambda_{0}}{p}<\lambda_{M}\left(u_{0}\right)$.
Then there exists some constant $C>0$ depending only on $u_{0}$
so that we have the linear growth estimate
\[
\norm{e^{Lt}\delta u_{0}}_{X}\leq C\delta e^{\mlam t}\,.
\]
\end{lem}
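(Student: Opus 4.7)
The plan is to dualize the strategy of Lemma \ref{lem:Linear Growth-1}: rather than construct a projection onto a piece of spectrum that $u_0$ \emph{does} activate, we instead build a projection $P'$ onto the piece of spectrum with real part strictly above $\lambda_M(u_0)$ and observe that $u_0$ must annihilate this subspace by the very definition of $\lambda_M(u_0)$ in \eqref{eq:Lambda_M}. Concretely, using \eqref{eq: L_xi Semigroup} we write
\[
e^{Lt}\delta u_{0}(x)=\frac{\delta}{2\pi}\int_{-1/2}^{1/2}e^{i\xi x}e^{L_{\xi}t}\check{u_{0}}(\xi,x)\,d\xi,
\]
so a bound of the form $\|e^{L_{\xi}t}\check{u_{0}}(\xi,\cdot)\|_{\lper}\leq Ce^{\mlam t}\|\check{u_{0}}(\xi,\cdot)\|_{\lper}$, uniform in $\xi$, together with the isometry \eqref{eq:Bloch Transform Isometry}, will give the conclusion.

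Next I would invoke Hypothesis \ref{hyp: Finitely Many Indices} to obtain a finite partition $\{I_{j}\}_{j=1}^{N}$ of $\xirang$ on which the number of eigenvalues of $L_{\xi}$ in $A_{u_{0}}$ is constant. For each $j$ this constancy, together with the holomorphy of $L_{\xi}$ in $\xi$, lets us pick a single simple closed contour $\Gamma_{j}$ enclosing exactly $A_{u_{0}}$ (see Figure \ref{fig:Contours-Figure}(b)) and no other part of $\sigma(L_{\xi})$, so that the Riesz projection
\[
P'_{j}(\xi)=\frac{1}{2\pi i}\int_{\Gamma_{j}}R(\zeta,L_{\xi})\,d\zeta
\]
is well-defined and holomorphic in $\xi$ on $I_{j}$. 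The key observation is that $P'_{j}(\xi)$ is a sum of the spectral projections $\tilde{P}_{\lambda}(\xi)$ attached to the finitely many eigenvalues $\lambda$ of $L_{\xi}$ enclosed by $\Gamma_{j}$; since each such $\lambda$ satisfies $\text{Re }\lambda>\mlam(u_{0})$, the defining property \eqref{eq:Lambda_M} of $\lambda_{M}$ forces $\tilde{P}_{\lambda}(\xi)\check{u_{0}}(\xi,\cdot)=0$, hence $P'_{j}(\xi)\check{u_{0}}(\xi,\cdot)=0$. Therefore $\check{u_{0}}(\xi,\cdot)$ lies in the range of $I-P'_{j}(\xi)$, and
\[
e^{L_{\xi}t}\check{u_{0}}(\xi,\cdot)=e^{L_{\xi}t}(I-P'_{j}(\xi))\check{u_{0}}(\xi,\cdot).
\]

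Now the operator $L_{\xi}(I-P'_{j}(\xi))$, regarded on the range of $I-P'_{j}(\xi)$, has its spectrum contained in $\sigma(L_{\xi})\setminus A_{u_{0}}$, which lies in $\{\text{Re }z\leq\mlam\}$. Sectoriality of $L$ is inherited by $L_{\xi}$, and the Riesz decomposition preserves sectoriality on each invariant subspace, so we obtain a semigroup bound
\[
\bigl\|e^{L_{\xi}t}(I-P'_{j}(\xi))\bigr\|_{\lper}\leq M_{j}(\xi)\,e^{\mlam t}.
\]
Continuity of $P'_{j}(\xi)$ and of $L_{\xi}$ in $\xi$ on the closure of $I_{j}$ allow us to take $M_{j}=\sup_{\xi\in I_{j}}M_{j}(\xi)<\infty$, and setting $M=\max_{j}M_{j}$ produces a bound uniform in $\xi\in\xirang$.

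Finally, applying the isometry \eqref{eq:Bloch Transform Isometry} (with the appropriate higher-order version for the $H^{n}$ norm on $X$, which is handled identically because $\xi$ only contributes bounded polynomial factors when commuted past derivatives on each $I_{j}$), we get
\[
\norm{e^{Lt}\delta u_{0}}_{X}^{2}\leq 2\pi\delta^{2}M^{2}e^{2\mlam t}\int_{-1/2}^{1/2}\norm{\check{u_{0}}(\xi,\cdot)}_{\lper}^{2}\,d\xi=M^{2}\delta^{2}\norm{u_{0}}_{X}^{2}e^{2\mlam t},
\]
which gives the claim with $C=M\norm{u_{0}}_{X}$. The main obstacle is Step 4: guaranteeing that the sectorial semigroup bound on the complementary invariant subspace holds with the sharp rate $\mlam$ (rather than $\mlam+\epsilon$) and uniformly in $\xi$. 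This is where Hypothesis \ref{hyp: Finitely Many Indices} is essential, because it reduces us to finitely many $\xi$-intervals on which $P'_{j}(\xi)$ is holomorphic and hence uniformly bounded, ruling out the degenerate scenario where eigenvalues could accumulate on or cross the line $\text{Re }z=\mlam$ infinitely often.
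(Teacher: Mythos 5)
Your proposal follows essentially the same route as the paper: partition $\xirang$ via Hypothesis \ref{hyp: Finitely Many Indices}, form the Riesz projection onto the part of the spectrum above $\text{Re }z=\mlam\left(u_{0}\right)$, observe that the definition \eqref{eq:Lambda_M} forces this projection to annihilate $\check{u_{0}}$ so that $\check{u_{0}}=\left(I-\tilde{P}'\left(\xi\right)\right)\check{u_{0}}$, bound the semigroup on the complementary invariant subspace by $e^{\mlam t}$, and conclude with the isometry \eqref{eq:Bloch Transform Isometry}. Your version is in fact more explicit than the paper's about why $u_{0}$ lies in the range of $I-\tilde{P}'\left(\xi\right)$ and about uniformity in $\xi$, and the caveat you raise about the sharp rate $\mlam$ versus $\mlam+\epsilon$ is a real subtlety that the paper glosses over by citation.
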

\begin{proof}
We start by using Hypothesis \ref{hyp: Finitely Many Indices} to
find intervals $I_{j}$ a finite partition of $\xirang$ so that the
number of eigenvalues of $L_{\xi}$ (counted by multiplicity) in $A_{u_{0}}$
is constant for each $\xi\in I_{j}$. Let $\Gamma$ be a curve that
encloses all of $A_{u_{0}}$ and no other part of $\sigma\left(L\right)$
(see Figure \ref{fig:Contours-Figure} (b)). Then we define the following
spectral projection,
\[
\tilde{P}'\left(\xi\right)u=\int_{\Gamma}R\left(\zeta,L_{\xi}\right)\ d\zeta\,.
\]

Note that by construction $\left(I-\tilde{P}'\left(\xi\right)\right)\left(u_{0}\right)^{\vee}\left(\xi,x\right)=\left(u_{0}\right)^{\vee}\left(\xi,x\right)$.
Combining this with \eqref{eq: L_xi Semigroup},
\[
e^{Lt}u_{0}\left(x\right)=\frac{1}{2\pi}\sum_{j}\int_{I_{j}}e^{i\xi x}e^{L_{\xi}t}\left(I-\tilde{P}'\left(\xi\right)\right)\left(u_{0}\right)^{\vee}\left(\xi,x\right)\ d\xi
\]

and \cite[Theorem 3.14.10]{staffans_well-posed_2005} we see that
$\tilde{P}'$ commutes with $e^{L_{\xi}t}$ and $\norm{\left(I-\tilde{P}'\left(\xi\right)\right)e^{L_{\xi}t}}_{L_{\text{per}}^{2}[0,2\pi)}\leq e^{\lambda_{M}t}$.
This gives the growth estimate. 
\end{proof}

\section{Nonlinear Instability}

We now state our main instability result. With Definition \ref{def: Stability Initial Perturbation}
in mind we start by defining $u_{\delta}$, for $\delta>0$, to be
the solution to the following evolution equation,

\begin{equation}
\left\{ \begin{aligned}\left(u_{\delta}\left(x,t\right)\right)_{t} & =Lu_{\delta}\left(x,t\right)+\mathcal{N}\left(u_{\delta}\left(x,t\right)\right)\\
u_{\delta}\left(x,0\right) & =\delta u_{0}\left(x\right)\,.
\end{aligned}
\right.\label{eq:u-delta}
\end{equation}

Showing that the initial perturbation $u_{0}$ is unstable is equivalent
to showing that $u_{\delta}$ cannot be made arbitrarily small by
taking $\delta$ arbitrarily small. In our instability theorem we
find an explicit time $T$ where the solution fails to be arbitrarily
small.

\begin{thm}
\label{thm:Instability Theorem} Consider the initial value problem
\eqref{eq:Time Evolution Equation}, with $L$ a sectorial operator
with $2\pi$-periodic coefficients as defined in \eqref{eq:L}, and
$\mathcal{N}$ a nonlinear operator satisfying the polynomial estimate
\eqref{eq:N Polynomial Estimate}. Assume that Hypothesis \ref{hyp: Finitely Many Indices}
holds and let $u_{0}$ be an initial perturbation with $\lambda_{M}\left(u_{0}\right)>\lambda_{0}/p$.
Then $u_{0}$ is unstable in the sense of Definition \ref{def: Stability Initial Perturbation}
as there exist $\epsilon>0$ and $\eta>0$ sufficiently small so that
for all $\delta<\eta$, at the time $T$ when
\begin{equation}
e^{\lambda_{M}\left(u_{0}\right)T}=\frac{2\eta}{\delta}\,,\label{eq:T}
\end{equation}

we have
\[
\norm{u_{\delta}\left(\cdot,T\right)}_{L^{2}\left(\mathbb{R}\right)}>\epsilon\,,
\]
where $\lambda_{M}\left(u_{0}\right)$ is given in equation \eqref{eq:Lambda_M},
$\lambda_{0}$ is given in equation \eqref{eq:Lambda_0}, and $p$
is given in equation \eqref{eq:N Polynomial Estimate}.
\end{thm}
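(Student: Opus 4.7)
Plan: Starting from Duhamel's formula \eqref{eq:Duhamel} and the reverse triangle inequality \eqref{eq:Reverse Triangle Inequality}, the task reduces to producing a lower bound on the linear part $\norm{e^{LT}\delta u_{0}}_{X}$ that dominates the Duhamel integral of $\mathcal{N}(u_{\delta})$ at time $T$. The linear bounds are supplied by Lemmas \ref{lem:Linear Growth-1} and \ref{lem:Linear Growth-2}; the nonlinear contribution requires a standard bootstrap.

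For the bootstrap, let $T^{*}$ be the largest time in $[0,T]$ on which $\norm{u_{\delta}(t)}_{X}\leq K\delta e^{\mlam t}$ holds throughout $[0,T^{*}]$, where $K=2C_{1}$ and $C_{1}$ is the constant of Lemma \ref{lem:Linear Growth-2}. On $[0,T^{*}]$, the polynomial estimate \eqref{eq:N Polynomial Estimate} together with the sectorial semigroup bound $\norm{e^{Lt}f}_{X}\lesssim e^{\lambda_{0}t}\norm{f}_{X}$ yields
\[
\norm{\int_{0}^{t}e^{L(t-s)}\mathcal{N}(u_{\delta}(s))\,ds}_{X}\lesssim \delta^{p}\int_{0}^{t}e^{\lambda_{0}(t-s)+p\mlam s}\,ds\leq \tilde{C}\,\delta^{p}e^{p\mlam t},
\]
where the hypothesis $\mlam>\lambda_{0}/p$ guarantees $p\mlam-\lambda_{0}>0$ so the $s$-integral is controlled. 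Combining with Lemma \ref{lem:Linear Growth-2} produces
\[
\norm{u_{\delta}(t)}_{X}\leq \delta e^{\mlam t}\bigl(C_{1}+\tilde{C}(\delta e^{\mlam t})^{p-1}\bigr)\leq \delta e^{\mlam t}\bigl(C_{1}+\tilde{C}(2\eta)^{p-1}\bigr),
\]
using $\delta e^{\mlam t}\leq 2\eta$ for $t\in[0,T]$. Choosing $\eta$ so small that $\tilde{C}(2\eta)^{p-1}<C_{1}$ yields a strict inequality, which by continuity forces $T^{*}=T$.

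Evaluating at $t=T$, where $\delta e^{\mlam T}=2\eta$, the nonlinear Duhamel term is bounded by $\tilde{C}(2\eta)^{p}$. Applying Lemma \ref{lem:Linear Growth-1} with $\omega$ chosen sufficiently close to $\mlam$ produces a linear lower bound of the form $C_{\mathrm{low}}\,\delta e^{\omega T}$, and the reverse triangle inequality then yields
\[
\norm{u_{\delta}(T)}_{X}\geq C_{\mathrm{low}}\,\delta e^{\omega T}-\tilde{C}(2\eta)^{p}.
\]
Taking $\eta$ small enough that the right-hand side exceeds a fixed $\epsilon>0$ completes the argument.

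The principal technical point is reconciling the exponential rates: the time $T$ is calibrated by $\mlam$ while Lemma \ref{lem:Linear Growth-1} only provides $\omega<\mlam$, so keeping the linear lower bound comparable to $\eta$ uniformly in $\delta<\eta$ demands careful selection of $\omega$ (and the accompanying interval $I$ in the definition \eqref{eq:Pu} of $P$). The role of the hypothesis $\mlam>\lambda_{0}/p$ is precisely to ensure the nonlinear error is of order $\eta^{p}$, strictly subordinate to the linear size $\sim\eta$, so that the comparison between $C_{\mathrm{low}}\delta e^{\omega T}$ and $\tilde{C}(2\eta)^{p}$ can be closed for $\eta$ sufficiently small.
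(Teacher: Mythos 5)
Your argument is essentially the paper's: the continuation step on $[0,T^{*}]$ is the same continuous-induction device the paper packages as the functional $\rho(t)=\sup_{0\le s\le t}\norm{u_{\delta}(s)}_{X}e^{-\lambda_{M}s}$ together with a polynomial-root argument (Lemma \ref{lem:Nonlinear-Growth}), and the endgame --- the reverse triangle inequality at $t=T$ pitting the linear lower bound against the $O(\eta^{p})$ nonlinear term --- is identical. The ``principal technical point'' you flag is genuine and is precisely where the paper is also thin: it simply asserts $C\eta\le\norm{e^{LT}\delta u_{0}}_{X}$ from Lemma \ref{lem:Linear Growth-1}, whereas for any \emph{fixed} $\omega<\lambda_{M}$ one has $\delta e^{\omega T}=2\eta\,e^{-(\lambda_{M}-\omega)T}\to0$ as $\delta\to0$ along \eqref{eq:T}, so closing this step requires letting $\omega$ approach $\lambda_{M}$ as $\delta\to0$ (e.g.\ $\lambda_{M}-\omega\lesssim 1/T$) while keeping the constant of Lemma \ref{lem:Linear Growth-1} uniform in $\omega$ --- a uniformity that lemma asserts but that neither you nor the paper verifies.
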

Recall from the introduction that our general strategy was to use
\eqref{eq:Reverse Triangle Inequality} to pit the exponential growth
of the linear term obtained from the unstable spectrum of $L$ against
the nonlinear term's slower growth. The former was developed in Lemmas
\ref{lem:Linear Growth-1} and \ref{lem:Linear Growth-2}, so we handle
the latter below. 

\begin{lem}
\label{lem:Nonlinear-Growth} For $u_{0},u_{\delta},T,\lambda_{M}\left(u_{0}\right),\lambda_{0},p$
as in Theorem \ref{thm:Instability Theorem}, then if $\lambda_{M}\left(u_{0}\right)>\lambda_{0}/p$
we have
\[
\norm{u_{\delta}\left(\cdot,t\right)}_{X}\leq\delta Ce^{\lambda_{M}\left(u_{0}\right)t}\qquad\text{for }t\leq T\,.
\]
\end{lem}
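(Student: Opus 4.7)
The plan is to apply a standard bootstrap/continuity argument directly to the Duhamel representation \eqref{eq:Duhamel} for $u_\delta$, namely
\[
u_\delta(x,t) = e^{Lt}\delta u_0(x) + \int_0^t e^{L(t-s)} \mathcal{N}(u_\delta(x,s))\, ds.
\]
Since $L$ is sectorial, the semigroup satisfies the crude bound $\|e^{Lt}\|_{X\to X} \leq C e^{\lambda_0 t}$ via \eqref{eq:Lambda_0}. The linear term is already controlled by Lemma \ref{lem:Linear Growth-2}, which gives $\|e^{Lt}\delta u_0\|_X \leq C\delta e^{\lambda_M t}$, so the entire burden is to bound the Duhamel integral by something of the form $\delta e^{\lambda_M t}$ on $[0,T]$.

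First I would set up the bootstrap by defining
\[
T^{*} = \sup\bigl\{ t \in [0,T] \;:\; \|u_\delta(\cdot,s)\|_X \leq 2C\delta e^{\lambda_M s} \text{ for all } s \in [0,t] \bigr\}.
\]
By continuity in $t$ and the fact that $\|u_\delta(\cdot,0)\|_X = \delta\|u_0\|_X$, we have $T^{*}>0$. For $t \leq T^{*}$, insert the bootstrap hypothesis together with the polynomial estimate \eqref{eq:N Polynomial Estimate} into Duhamel to get
\[
\|u_\delta(\cdot,t)\|_X \leq C\delta e^{\lambda_M t} + \int_0^t C e^{\lambda_0 (t-s)} (2C\delta)^p e^{p\lambda_M s}\, ds.
\]

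The key computation is the exponential integral, which uses precisely the hypothesis $\lambda_M > \lambda_0/p$. Factoring out $e^{\lambda_0 t}$ and integrating yields
\[
\int_0^t e^{(p\lambda_M - \lambda_0) s}\, ds = \frac{e^{(p\lambda_M - \lambda_0) t} - 1}{p\lambda_M - \lambda_0} \leq \frac{e^{(p\lambda_M - \lambda_0) t}}{p\lambda_M - \lambda_0},
\]
so the integral term is bounded by $C'(2C\delta)^p e^{p\lambda_M t}$ for a constant $C'$ depending only on $p\lambda_M - \lambda_0$. To close the bootstrap I need
\[
C'(2C\delta)^p e^{p\lambda_M t} \leq C\delta e^{\lambda_M t},
\]
which rearranges to $C'(2C)^p \delta^{p-1} e^{(p-1)\lambda_M t} \leq C$. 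Using $e^{\lambda_M t} \leq e^{\lambda_M T} = 2\eta/\delta$ from the choice of $T$ in \eqref{eq:T}, the left side is dominated by $C'(2C)^p (2\eta)^{p-1}$, which is smaller than $C$ once $\eta$ is chosen small enough. This improves the bootstrap bound to a strict inequality on $[0,T^{*}]$, forcing $T^{*} = T$ by continuity and giving the claim.

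The main technical point is the interplay between the sectorial bound $e^{\lambda_0 t}$ on the semigroup inside the Duhamel integral and the sharper bound $e^{\lambda_M t}$ on the full solution that the bootstrap is trying to propagate. The condition $\lambda_M > \lambda_0/p$ is exactly what makes the integrated nonlinear response grow at rate $e^{p\lambda_M t}$ rather than $e^{\lambda_0 t}$, and the factor of $\delta^{p-1}$ coming from the $p$-th power in the nonlinearity is what allows $\eta$ to absorb the constants. Other than bookkeeping, no delicate estimates on the semigroup or projections are required beyond what has already been established.
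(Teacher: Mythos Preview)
Your proof is correct and follows essentially the same approach as the paper: both are continuity arguments based on the Duhamel formula, the sectorial bound $\|e^{Lt}\|\leq Ce^{\lambda_0 t}$, Lemma~\ref{lem:Linear Growth-2}, and the key exponential integral that converts the hypothesis $\lambda_M>\lambda_0/p$ into a gain of $\delta^{p-1}e^{(p-1)\lambda_M t}\leq (2\eta)^{p-1}$. The only cosmetic difference is that the paper packages the continuity argument via the quantity $\rho(t)=\sup_{s\leq t}\|u_\delta(s)\|_X e^{-\lambda_M s}$ and a polynomial trapping inequality in $z=\rho/\delta$, whereas you phrase it as a $T^*$ bootstrap with a strict improvement from $2C$ to $C$; the underlying mechanism and the use of \eqref{eq:T} to absorb constants into $\eta$ are identical.
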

\begin{proof}
For $t\leq T$ we define the quantity
\begin{equation}
\rho\left(t\right)=\sup_{0\leq s\leq t}\norm{u_{\delta}\left(s\right)}_{X}e^{-\mlam s}\:.\label{eq:Rho}
\end{equation}

To prove the result, it is sufficient to show that $\rho\left(t\right)$
is uniformly bounded for $t\leq T$. We start by taking the norm of
\eqref{eq:Duhamel}. From Lemma \ref{lem:Linear Growth-2} we have
that $\norm{e^{Lt}\delta u_{0}}_{X}\leq\delta Ce^{\lambda_{M}t}$,
and as $L$ is sectorial%
\footnote{The assumption that $L$ is sectorial may be relaxed so long as we
have this same semigroup estimate and $\lambda_{0}$ is finite.%
} then $\norm{e^{Lt}}_{X}\leq e^{\lambda_{0}t}$. Then after recalling
\eqref{eq:N Polynomial Estimate}, we have
\[
\norm{u_{\delta}\left(\cdot,t\right)}_{X}\leq\delta Ce^{\lambda_{M}t}+\int_{0}^{t}e^{\lambda_{0}\left(t-s\right)}\norm{u_{\delta}\left(\cdot,s\right)}_{X}^{p}\ ds\,.
\]

Then we multiply and divide the nonlinear term by $e^{-p\lambda_{M}s}$,
apply \eqref{eq:Rho}, evaluate the integral, and note that $\rho\left(t\right)$
is monotone increasing to obtain

\begin{equation}
\norm{u_{\delta}\left(\cdot,t\right)}_{X}\leq\delta Ce^{\lambda_{M}t}+\rho\left(t\right)^{p}\frac{e^{p\mlam t}-e^{\lambda_{0}t}}{p\mlam-\lambda_{0}}\,.\label{eq:Nonlinear-Bound-Post-Calculation}
\end{equation}

Recall the hypothesis $\lambda_{0}/p<\lambda_{M}$, and note that
it implies
\[
e^{\lambda_{0}t}<e^{p\lambda_{M}t}\quad\text{and}\quad0<p\lambda_{M}-\lambda_{0}\,,
\]

so we may focus solely on the larger exponential growth.

Note that this upper bound \eqref{eq:Nonlinear-Bound-Post-Calculation}
also applies for all $\norm{u_{\delta}\left(\cdot,s\right)}_{X}$
for $s\leq t$. Multiplying both sides by $e^{-\lambda_{M}s}$ and
taking the supremum over all $s\leq t$ yields

\begin{equation}
\rho\left(t\right)\leq\delta C+\rho\left(t\right)^{p}\frac{e^{\left(p-1\right)\mlam t}}{p\mlam-\lambda_{0}}\,.\label{eq:Nonlinear-Bound-Polynomial}
\end{equation}

Replacing the right hand side exponential term's $t$ with $T$, recalling
\eqref{eq:T}, and dividing both sides by $\delta$,

\[
\frac{\rho\left(t\right)}{\delta}\leq C+\frac{\left(2\eta\right)^{p-1}}{p\mlam-\lambda_{0}}\left(\frac{\rho\left(t\right)}{\delta}\right)^{p}\,.
\]

Setting $z=\frac{\rho\left(t\right)}{\delta}$ leads to the equivalent
polynomial inequality valid for $t\leq T$,
\begin{equation}
0\leq C-z+\frac{\left(2\eta\right)^{p-1}}{p\mlam-\lambda_{0}}z^{p}\,.\label{eq:Nonlinear-Bound-Polynomial-2}
\end{equation}

This polynomial has a critical point at
\[
z=\frac{1}{2\eta}\left(\frac{p\mlam-\lambda_{0}}{p}\right)^{\frac{1}{p-1}}>0\,.
\]

And at this critical point the polynomial takes on the value 
\[
C-\frac{1}{2\eta}\left[\left(\frac{p\mlam-\lambda_{0}}{p}\right)^{\frac{1}{p-1}}\left(1+\frac{1}{p}\right)\right]\,.
\]

Then so long as $\eta$ is smaller than some expression that only
involves $p,\mlam,\lambda_{0}$, then the polynomial will be negative
at some $z$-value to the right of $z=0$. In particular, it will
have a root to the right of $z=0$.

When $t=0$, $z=\frac{\norm{u_{\delta}\left(0\right)}}{\delta}=1$,
so choosing $\eta$ sufficiently small will satisfy the polynomial
inequality initially at $t=0$. Then the existence of a root means
that $z$ is uniformly bounded for $t\leq T$, and hence the uniform
bound for $\rho$ for $t\leq T$.
\end{proof}
Now we can use this lemma to establish an upper bound for the nonlinear
growth in \eqref{eq:Duhamel} and finally prove Theorem \ref{thm:Instability Theorem}.
\begin{proof}
(Of Theorem \ref{thm:Instability Theorem})

Lemma \ref{lem:Linear Growth-1}, when $t=T$, gives us that
\begin{equation}
C\eta\leq\norm{e^{LT}\delta u_{0}}_{X}\,.\label{eq:Linear Part Lower Bound}
\end{equation}

From Lemma \ref{lem:Nonlinear-Growth}, we can bound the nonlinear
part of $u_{\delta}$ by

\begin{eqnarray*}
\norm{\int_{0}^{t}e^{L\left(t-s\right)}\mathcal{N}\left(u_{\delta}\left(s\right)\right)\ ds}_{X} & \leq & \left(C\delta e^{\mlam t}\right)^{p}\int_{0}^{t}e^{\mlam\left(t-s\right)}\ ds\,.
\end{eqnarray*}

In particular, when $t=T$, we have
\begin{equation}
\norm{\int_{0}^{T}e^{L\left(T-s\right)}\mathcal{N}\left(u_{\delta}\left(s\right)\right)\ ds}_{X}\leq\bar{C}\eta^{p}\,.\label{eq:Nonlinear Part Upper Bound}
\end{equation}

Then if $\eta$ is chosen sufficiently small so that $C\eta\geq\bar{C}\eta^{p}$,
then by the reverse triangle inequality we have
\[
0<C\eta-\bar{C}\eta^{p}\leq\left|\norm{e^{LT}\delta u_{0}}_{X}-\norm{\int_{0}^{T}e^{L\left(T-s\right)}\mathcal{N}\left(u_{\delta}\right)}_{X}\right|\leq\norm{u_{\delta}\left(T\right)}\,.
\]

Note that the leftmost term $C\eta-\bar{C}\eta^{p}$ is a positive
constant independent of $\delta$: this becomes our $\epsilon$, which
completes the proof.
\end{proof}

\section{Extension to Dissipative Systems of Conservation Laws}

Recall that our main Theorem  \ref{thm:Instability Theorem} was proven
in the context of reaction-diffusion type systems of the form \eqref{eq:Time Evolution Equation}:
specifically for systems with no derivatives in the nonlinearity.
Some examples of such systems would be scalar reaction diffusion,
FitzHugh-Nagumo \cite{rademacher_computing_2007}, the Klausmeier
model for vegetation stripe formulation \cite{sherratt_nonlinear_2007,sherratt_numerical_2013},
and the Belousov-Zhabotinskii reaction \cite{bordyugov_anomalous_2010}.
However, our general methodology is sufficiently robust enough that
it can apply more widely to dissipative systems of conservation laws.
{} As a specific example, consider the following Korteweg-de-Vries/Kuramoto-Sivashinsky
(KdV-KS) equation
\begin{equation}
p_{t}+pp_{x}+p_{xxx}+\beta\left(p_{xx}+p_{xxxx}\right)=0\label{eq:KdV-KS}
\end{equation}
with $0<\beta\ll1$, which arises in the context of inclined thin
film flow \cite{johnson_spectral_2015}. It was shown in \cite{barker_numerical_2014,johnson_spectral_2015}
that this equation admits periodic traveling wave solutions $\phi$
whose linearization satisfies Hypothesis \ref{hyp: Finitely Many Indices}.
If we consider solutions of the form $p\left(x,t\right)=\phi\left(x\right)+u\left(x,t\right)$,
we find that $u$ satisfies the following perturbation equation \cite[Lemma 3.3]{barker_nonlinear_2013}%
\footnote{A slight discrepancy arises in that \cite{barker_nonlinear_2013}
considers a modulation $\psi\left(x,t\right)$ so that $u\left(x,t\right)=p\left(x+\psi\left(x,t\right),t\right)-\phi\left(x\right)$
and that here we neglect such a modulation.%
}

\begin{equation}
u_{t}+u_{xxx}+\beta\left(u_{xx}+u_{xxxx}\right)+\phi u_{x}+\phi'u+uu_{x}=0\,.\label{eq:KdV Perturbation Equation}
\end{equation}

Our goal is to characterize which initial perturbations $u_{0}$ of
our traveling wave $\phi$ will result in an unstable solution $p$.
Note that the nonlinearity $uu_{x}$ does not in any standard Sobolev
space satisfy a polynomial estimate of the form \eqref{eq:N Polynomial Estimate}.
Nevertheless we can use the following damping estimate as in \cite[Proposition 3.4]{barker_nonlinear_2013}
to obtain a workable analogue. For completeness we reproduce the proof
of this damping estimate.

\begin{lem}
\label{lem:Damping Estimate} Let $u$ be a solution to \eqref{eq:KdV Perturbation Equation}.
Then $u$ satisfies the following nonlinear damping estimate
\begin{equation}
\norm u_{H^{2}}\leq e^{-\beta t}\norm{u\left(0\right)}_{H^{2}}+C\int_{0}^{T}e^{-\beta\left(t-s\right)}\norm{u\left(s\right)}_{H^{1}\left(\mathbb{R}\right)}\ ds\label{eq:Damping Estimate}
\end{equation}
for some constant $C>0$.\end{lem}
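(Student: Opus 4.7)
The plan is to perform a standard nonlinear energy estimate at the $H^2$ level for the perturbation equation \eqref{eq:KdV Perturbation Equation}, with the fourth-order dissipation $-\beta u_{xxxx}$ providing the damping and the lower-order linear terms, the transport contributions from $\phi$, and the Burgers-type nonlinearity $uu_x$ treated as controlled perturbations. Concretely, I would set $E(t) = \norm{u(t)}_{H^2}^2$, differentiate in $t$, and substitute $u_t = -u_{xxx} - \beta(u_{xx} + u_{xxxx}) - \phi u_x - \phi' u - uu_x$ into $\frac{1}{2}\frac{d}{dt}E = \langle u,u_t\rangle_{L^2} + \langle u_x, u_{xt}\rangle_{L^2} + \langle u_{xx}, u_{xxt}\rangle_{L^2}$.

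The first step is the linear principal part. After integration by parts (justified by $L^2(\mathbb{R})$ decay), the skew-adjoint term $-u_{xxx}$ drops out of each of the three inner products, and the pair $-\beta u_{xx} - \beta u_{xxxx}$ telescopes across derivative levels to produce a residual of the form $\beta \norm{u_x}^2_{L^2} - \beta \norm{u_{xxxx}}^2_{L^2}$. Invoking the Gagliardo--Nirenberg interpolation $\norm{u_{xx}}_{L^2}, \norm{u_{xxx}}_{L^2} \leq C\,\norm{u_{xxxx}}_{L^2}^{\theta}\norm{u}_{L^2}^{1-\theta}$ and Young's inequality, the $-\beta\norm{u_{xxxx}}^2_{L^2}$ term dominates $2\beta\norm{u}_{H^2}^2$ up to a harmless multiple of $\norm{u}_{H^1}^2$, so the entire linear contribution is bounded above by $-2\beta E + C\norm{u}_{H^1}^2$.

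Next, for the periodic-coefficient transport terms $\phi u_x$ and $\phi' u$, the $2\pi$-periodicity and smoothness of $\phi$ means each pairing against $u$, $u_x$, or $u_{xx}$ can, after one integration by parts to shift any offending derivative off $u$, be bounded by $C\,\norm{u}_{H^2}\norm{u}_{H^1}$. For the Burgers nonlinearity $uu_x$, the only possibly dangerous piece is $\langle u_{xx}, (uu_x)_{xx}\rangle$; expanding and integrating by parts rewrites this as a sum of terms of the form $\int u_x u_{xx}^2\,dx$ and $\int u u_{xx}u_{xxx}\,dx$, the second of which integrates by parts into $-\tfrac{1}{2}\int u_x u_{xx}^2\,dx$. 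Using the one-dimensional Sobolev embedding $H^1 \hookrightarrow L^\infty$ gives a bound by $\norm{u_x}_{L^\infty}\norm{u_{xx}}_{L^2}^2 \leq C\norm{u}_{H^2}^2 \norm{u}_{H^1}$, which under the standing smallness assumption implicit in the perturbative regime ($\norm{u}_{H^2}$ small) is absorbed as $C\,\norm{u}_{H^2}\norm{u}_{H^1}$.

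Assembling everything yields the differential inequality $\tfrac{1}{2}\tfrac{d}{dt}\norm{u}_{H^2}^2 \leq -2\beta \norm{u}_{H^2}^2 + C\,\norm{u}_{H^2}\norm{u}_{H^1}$; after Young's inequality to absorb a factor $\beta\norm{u}_{H^2}^2$ on the right, I divide through by $\norm{u}_{H^2}$ to obtain the scalar ODE $\tfrac{d}{dt}\norm{u}_{H^2} \leq -\beta\norm{u}_{H^2} + C\,\norm{u}_{H^1}$, and integrating this linear first-order inequality against the integrating factor $e^{\beta t}$ produces \eqref{eq:Damping Estimate}. The main obstacle is the bookkeeping for the nonlinear and variable-coefficient terms at the top $H^2$ level: one must ensure every contribution can be written as $\norm{u}_{H^2}$ times $\norm{u}_{H^1}$ (or be absorbed by the dissipation), which requires doing the integration by parts for $(uu_x)_{xx}$ in just the right order and exploiting the a priori smallness of $\norm{u}_{H^2}$ to convert the genuinely cubic residual $\norm{u}_{H^2}^2\norm{u}_{H^1}$ into an absorbable term.
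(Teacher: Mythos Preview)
Your overall strategy---an $H^2$ energy estimate driven by the fourth-order dissipation $-\beta u_{xxxx}$, with interpolation to control intermediate derivatives and a Gronwall-type conclusion---is exactly the paper's approach. The paper organizes the computation via $\tfrac{1}{2}\tfrac{d}{dt}\|u\|_{H^2}^2=\langle u_t,\,u-u_{xx}+u_{xxxx}\rangle$, which is the same as your three inner products after integration by parts, and ends with $\tfrac{1}{2}\tfrac{d}{dt}\|u\|_{H^2}^2\le -\tfrac{\beta}{2}\|u\|_{H^2}^2+C\|u\|_{H^1}^2$ followed by Gronwall.

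Two bookkeeping points are off, though neither is fatal. First, the variable-coefficient transport terms do \emph{not} give $C\|u\|_{H^2}\|u\|_{H^1}$: the top-level pairing $\langle u_{xx},(\phi u_x)_{xx}\rangle$ produces, after one integration by parts, a term $-\tfrac{1}{2}\int\phi' u_{xx}^2$, which is only $C\|u_{xx}\|_{L^2}^2$. You cannot bound $\|u_{xx}\|^2$ by $\|u\|_{H^2}\|u\|_{H^1}$. The fix---and this is what the paper does---is to \emph{postpone} converting $-\beta\|u_{xxxx}\|^2$ into $-2\beta\|u\|_{H^2}^2$; keep the raw $-\beta\|u_{xxxx}\|^2$, collect all the stray $\|u_{xx}\|^2$ and $\|u_{xxx}\|^2$ contributions, and use the elementary interpolation $\|\partial_x^j u\|^2\le \tfrac{1}{4a}\|\partial_x^{j-1}u\|^2+a\|\partial_x^{j+1}u\|^2$ to trade them for $\epsilon\|u_{xxxx}\|^2+C_\epsilon\|u_x\|^2$, then absorb $\epsilon\|u_{xxxx}\|^2$ into the dissipation. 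Only after that do you convert the surviving dissipation into $-c\beta\|u\|_{H^2}^2$.

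Second, your nonlinear bound $\|u_x\|_{L^\infty}\|u_{xx}\|_{L^2}^2\le C\|u\|_{H^2}^2\|u\|_{H^1}$ is not correct as written (Sobolev embedding gives $\|u_x\|_{L^\infty}\lesssim\|u\|_{H^2}$, so the right-hand side is $C\|u\|_{H^2}^3$). The paper sidesteps this entirely by observing that $\langle u_{xx},(uu_x)_{xx}\rangle=\langle u_{xxxx},uu_x\rangle$ and applying Young's inequality with a small constant on $\|u_{xxxx}\|^2$, leaving $\tfrac{1}{\beta}\|u\|_{L^\infty}^2\|u_x\|^2$, which only needs $\|u\|_{L^\infty}\le\|u\|_{H^1}$. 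This is cleaner than your route, though yours also closes under an a~priori smallness bound on $\|u\|_{H^2}$.
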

\begin{proof}
Let $\left\langle \cdot,\cdot\right\rangle $ denote the $L^{2}\left(\mathbb{R}\right)$
inner product. Using integration by parts,
\begin{equation}
\frac{1}{2}\frac{d}{dt}\left(\norm u_{H^{2}\left(\mathbb{R}\right)}^{2}\right)=\left\langle u_{t},u-u_{xx}+u_{xxxx}\right\rangle \,.\label{eq:d/dt-inner-product}
\end{equation}

We can obtain $u_{t}$ from \eqref{eq:KdV Perturbation Equation}.
Using Cauchy-Schwartz, Young's inequality, and the fact that $\norm u_{L^{\infty}\left(\mathbb{R}\right)}\leq\norm u_{H^{1}\left(\mathbb{R}\right)}$
allows us to bound the nonlinear term,
\[
\left\langle uu_{x},u_{xxxx}-u_{xx}\right\rangle \leq\frac{1}{2}\norm u_{L^{\infty}\left(\mathbb{R}\right)}\left(\left(1+\frac{1}{2\beta}\norm u_{L^{\infty}\left(\mathbb{R}\right)}\right)\norm{u_{x}}_{L^{2}\left(\mathbb{R}\right)}^{2}+\norm{u_{xx}}_{L^{2}\left(\mathbb{R}\right)}^{2}\right)+\frac{\beta}{2}\norm{u_{xxxx}}_{L^{2}\left(\mathbb{R}\right)}\,.
\]
The remainder of \eqref{eq:d/dt-inner-product} can be handled with
integration by parts and recognizing perfect derivatives, resulting
in the bound

\begin{equation}
\frac{1}{2}\frac{d}{dt}\left(\norm u_{H^{2}\left(\mathbb{R}\right)}^{2}\right)\leq\sum_{j=0}^{3}\alpha_{j}\norm{\partial_{x}^{j}u}_{L^{2}\left(\mathbb{R}\right)}^{2}-\frac{\beta}{2}\norm{u_{xxxx}}_{L^{2}\left(\mathbb{R}\right)}^{2}\,,\label{eq:Damping-Nonlinear-Part-Bound}
\end{equation}
where the $\alpha_{j}$ are non-negative constants depending on $\delta$,
$\norm{\partial_{x}^{k}\phi}_{L^{\infty}}$ for $k=0,1,2,3,4$, and
$\norm u_{L^{\infty}}$. 

Before proceeding we derive a useful Sobolev interpolation inequality.
Using integration by parts, for integer $j\geq1$ we have
\[
\norm{\partial_{x}^{j}u}_{L^{2}\left(\mathbb{R}\right)}^{2}=-\left\langle \partial_{x}^{j-1}u,\partial_{x}^{j+1}u\right\rangle \,.
\]

Using Cauchy-Schwartz and Young's inequality with an arbitrary positive
constant $a_{j}$ yields the inequality
\[
\norm{\partial_{x}^{j}u}_{L^{2}\left(\mathbb{R}\right)}^{2}\leq\frac{1}{4a_{j}}\norm{\partial_{x}^{j-1}}_{L^{2}\left(\mathbb{R}\right)}^{2}+a_{j}\norm{\partial_{x}^{j+1}u}_{L^{2}\left(\mathbb{R}\right)}^{2}\,.
\]

Taking a linear combination of the $j=2,3$ cases, for appropriate
choices of the $a_{j}$ and a sufficiently large constant $\tilde{C}>0$,
we can obtain the following Sobolev interpolation inequality

\begin{equation}
\left(\alpha_{2}+\frac{\beta}{2}\right)\norm{u_{xx}}_{L^{2}\left(\mathbb{R}\right)}^{2}+\alpha_{3}\norm{u_{xxx}}_{L^{2}\left(\mathbb{R}\right)}^{2}\leq\tilde{C}\norm{u_{x}}_{L^{2}\left(\mathbb{R}\right)}^{2}+\frac{\beta}{2}\norm{u_{xxxx}}_{L^{2}\left(\mathbb{R}\right)}^{2}\,.\label{eq:Damping-Sobolev-Interpolation}
\end{equation}

Combining \eqref{eq:Damping-Sobolev-Interpolation} and \eqref{eq:Damping-Nonlinear-Part-Bound}
gives, for some constant $C>0$,
\[
\frac{1}{2}\frac{d}{dt}\left(\norm u_{H^{2}\left(\mathbb{R}\right)}^{2}\right)\leq-\frac{\beta}{2}\norm u_{H^{2}\left(\mathbb{R}\right)}^{2}+C\norm u_{H^{1}\left(\mathbb{R}\right)}^{2}\,.
\]
The estimate \eqref{eq:Damping Estimate} then follows from Gronwall's
inequality.
\end{proof}
The key ingredient to the damping estimate was that the leading term
$-\beta u_{xxxx}$ of \eqref{eq:KdV Perturbation Equation} was negative:
such an estimate is not necessarily limited to the specific PDE \eqref{eq:KdV-KS}.
In particular a damping estimate also holds for the St. Venant equation
\cite[Proposition 4.2]{rodrigues_periodic-coefficient_2016}, which
is a hyperbolic-parabolic system of balance laws. In light of this,
we introduce a general requirement on the nonlinearity, that for constants
$\theta>0$, $C>0$, $T>0$, $n\geq2$, $p>1$, and $t<T$,

\begin{equation}
\norm{\mathcal{N}\left(u\left(t\right)\right)}_{H^{n}\left(\mathbb{R}\right)}\leq\norm{u\left(t\right)}_{H^{1}\left(\mathbb{R}\right)}^{p-1}\left(e^{-\theta t}\norm{u\left(0\right)}_{H^{n}\left(\mathbb{R}\right)}+C\int_{0}^{T}e^{-\theta\left(t-s\right)}\norm{u\left(s\right)}_{H^{1}\left(\mathbb{R}\right)}\ ds\right)\,.\label{eq:Damping Nonlinearity Bound}
\end{equation}

We can then prove an alternate instability result.

\begin{thm}
\label{thm:Applied-Problem-Instability-Theorem} Consider the initial
value problem \eqref{eq:Time Evolution Equation}, with $L$ a sectorial
operator with $2\pi$-periodic coefficients as defined in \eqref{eq:L},
and $\mathcal{N}$ a nonlinear operator instead satisfying the estimate
\eqref{eq:Damping Nonlinearity Bound}. Assume that Hypothesis \ref{hyp: Finitely Many Indices}
holds and let $u_{0}$ be an initial perturbation which satisfies
both $\frac{\lambda_{0}+\theta}{p-1}<\lambda_{M}\left(u_{0}\right)$
and $\frac{\lambda_{0}}{p}<\lambda_{M}\left(u_{0}\right)$, then $u_{0}$
is unstable in the sense of Definition \ref{def: Stability Initial Perturbation}
as there exist $\epsilon>0$ and $\eta>0$ sufficiently small so that
for all $\delta<\eta$, at the time $T$ when
\begin{equation}
e^{\lambda_{M}\left(u_{0}\right)T}=\frac{2\eta}{\delta}\,,\label{eq:T-1-1}
\end{equation}

we have
\[
\norm{u_{\delta}\left(\cdot,T\right)}_{L^{2}\left(\mathbb{R}\right)}>\epsilon\,,
\]
where $\lambda_{M}\left(u_{0}\right)$ is given in equation \eqref{eq:Lambda_M},
$\lambda_{0}$ is given in equation \eqref{eq:Lambda_0}, and $u_{\delta}$
is the solution to \eqref{eq:u-delta}.\end{thm}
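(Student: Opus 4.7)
The plan is to recycle the architecture of Lemma \ref{lem:Nonlinear-Growth} and Theorem \ref{thm:Instability Theorem}, replacing the simple polynomial bound \eqref{eq:N Polynomial Estimate} with the damping-type estimate \eqref{eq:Damping Nonlinearity Bound} and tracking two distinct exponential growth rates that will force both hypotheses on $\lambda_M\left(u_{0}\right)$ to enter the argument. The linear bounds supplied by Lemmas \ref{lem:Linear Growth-1} and \ref{lem:Linear Growth-2} apply verbatim, so only the nonlinear term in Duhamel's formula \eqref{eq:Duhamel} needs re-estimation.

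Set $\rho\left(t\right)=\sup_{0\leq s\leq t}\norm{u_{\delta}\left(s\right)}_{H^{n}\left(\mathbb{R}\right)}e^{-\lambda_{M}s}$ and aim to show $\rho$ is uniformly bounded on $[0,T]$. Taking the $H^{n}$ norm of \eqref{eq:Duhamel}, invoking the sectorial bound $\norm{e^{Lt}}\leq e^{\lambda_{0}t}$, and substituting \eqref{eq:Damping Nonlinearity Bound} while using the embedding $\norm{u}_{H^{1}}\leq\norm{u}_{H^{n}}\leq\rho\left(s\right)e^{\lambda_{M}s}$, I would obtain a bound of the schematic form
\begin{equation*}
\norm{u_{\delta}\left(t\right)}_{H^{n}}\leq C\delta e^{\lambda_{M}t}+\int_{0}^{t}e^{\lambda_{0}\left(t-s\right)}\rho\left(s\right)^{p-1}e^{\left(p-1\right)\lambda_{M}s}\left(\delta e^{-\theta s}+C\int_{0}^{s}e^{-\theta\left(s-r\right)}\rho\left(r\right)e^{\lambda_{M}r}\,dr\right)ds\,.
\end{equation*}
Evaluating the inner integral gives a contribution proportional to $e^{\lambda_{M}s}/\left(\lambda_{M}+\theta\right)$, after which the outer integral splits into two pieces: one whose integrand behaves like $e^{\lambda_{0}\left(t-s\right)}e^{\left(\left(p-1\right)\lambda_{M}-\theta\right)s}$ and another like $e^{\lambda_{0}\left(t-s\right)}e^{p\lambda_{M}s}$. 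The hypothesis $\lambda_{M}>\left(\lambda_{0}+\theta\right)/\left(p-1\right)$ is exactly what makes the first integral evaluate to a pure $e^{\left(\left(p-1\right)\lambda_{M}-\theta\right)t}$ term (modulo an exponentially smaller remainder), while $\lambda_{M}>\lambda_{0}/p$ plays the analogous role for the second integral as in the original proof.

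Multiplying the resulting inequality by $e^{-\lambda_{M}t}$, taking the supremum over $s\leq t$, evaluating at $t=T$ with $e^{\lambda_{M}T}=2\eta/\delta$, and setting $z=\rho\left(T\right)/\delta$ yields a polynomial inequality of the form
\begin{equation*}
z\leq C+A\left(2\eta\right)^{p-2}\delta\, z^{p-1}+B\left(2\eta\right)^{p-1}z^{p}\,,
\end{equation*}
with $A,B$ depending only on $p,\lambda_{M},\lambda_{0},\theta$. As in Lemma \ref{lem:Nonlinear-Growth}, the auxiliary polynomial $F\left(z\right)=C-z+A\left(2\eta\right)^{p-2}\delta z^{p-1}+B\left(2\eta\right)^{p-1}z^{p}$ is negative at some $z>0$ once $\eta$ is sufficiently small, and since $z\left(0\right)=1$ satisfies $F\left(1\right)\geq 0$ for small $\eta$, continuity forces $z$ to remain in a bounded set independent of $\delta$. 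This delivers $\norm{u_{\delta}\left(t\right)}_{H^{n}}\leq C\delta e^{\lambda_{M}t}$ on $[0,T]$.

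With this nonlinear growth estimate in hand, the closing step duplicates the proof of Theorem \ref{thm:Instability Theorem}: the linear lower bound from Lemma \ref{lem:Linear Growth-1} at $t=T$ is of order $C\eta$, while the nonlinear part in \eqref{eq:Duhamel} is bounded using \eqref{eq:Damping Nonlinearity Bound} by a quantity of order $\eta^{p}$, and the reverse triangle inequality produces a positive $\delta$-independent lower bound $\varepsilon$ for $\norm{u_{\delta}\left(T\right)}$. The main technical obstacle is the bookkeeping in the iterated integral: one must verify carefully that after carrying out the inner integration the factor $e^{\left(\left(p-1\right)\lambda_{M}-\theta\right)T}$ combines with powers of $\delta$ (via $e^{\lambda_{M}T}=2\eta/\delta$) into a bounded coefficient, and this is precisely what forces the appearance of the hypothesis $\left(\lambda_{0}+\theta\right)/\left(p-1\right)<\lambda_{M}$ alongside the original $\lambda_{0}/p<\lambda_{M}$.
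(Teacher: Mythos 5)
Your proposal follows essentially the same route as the paper: take the linear lemmas as given, define $\rho$, feed the damping estimate \eqref{eq:Damping Nonlinearity Bound} into Duhamel, evaluate the iterated integral to get the two growth rates $\left(p-1\right)\lambda_{M}-\theta$ and $p\lambda_{M}$ (which is exactly where the two hypotheses on $\lambda_{M}\left(u_{0}\right)$ enter), and reduce to the polynomial inequality $0\leq C-z+a_{p-1}\,\eta^{p-2}\delta\,z^{p-1}+a_{p}\,\eta^{p-1}z^{p}$ whose root for small $\eta$ bounds $\rho$ uniformly, after which the reverse-triangle-inequality endgame of Theorem \ref{thm:Instability Theorem} closes the argument. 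The only cosmetic difference is that you track $\rho$ in $H^{n}$ while the paper sets $X=H^{1}\left(\mathbb{R}\right)$ and uses the $H^{n}$ norm only where the damping bound supplies it; this does not change the argument.
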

\begin{rem}
In Lemma \ref{lem:Damping Estimate} we show that \eqref{eq:KdV Perturbation Equation}
satisfies a nonlinear estimate of the form \eqref{eq:Damping Nonlinearity Bound}
with $\theta=\beta$. However, equation \eqref{eq:Damping-Sobolev-Interpolation}
of the proof can be modified to obtain any $0<\theta<\beta$, so the
$\theta$ in the requirement that $\frac{\lambda_{0}+\theta}{p-1}<\lambda_{M}$
is not restrictive. \end{rem}
\begin{proof}
Here we take $X=H^{1}\left(\mathbb{R}\right)$. Lemmas \ref{lem:Linear Growth-1}
and \ref{lem:Linear Growth-2} and the proof of Theorem \ref{thm:Instability Theorem}
apply with no further modification, provided we can establish an estimate
as in Lemma \ref{lem:Nonlinear-Growth}.

We again define $\rho$ as in \eqref{eq:Rho} and start by concentrating
on the nonlinear term of \eqref{eq:Duhamel}. By the triangle inequality
and \eqref{eq:Damping Nonlinearity Bound},

\begin{eqnarray*}
\int_{0}^{t}\norm{e^{L\left(t-s\right)}}_{X}\norm{\mathcal{N}\left(u\left(s\right)\right)}_{X}\ ds & \leq & \int_{0}^{t}e^{\lambda_{0}\left(t-s\right)}\norm{u\left(s\right)}_{X}^{p-1}e^{-\theta s}\norm{u\left(0\right)}_{H^{n}\left(\mathbb{R}\right)}\ ds\\
 &  & +C\int_{0}^{t}e^{\lambda_{0}\left(t-s\right)}\norm{u\left(s\right)}_{X}^{p-1}\int_{0}^{s}e^{-\theta\left(s-s'\right)}\norm{u\left(s'\right)}_{X}\ ds'\ ds\,.
\end{eqnarray*}

We multiply and divide by appropriate powers of $e^{\lambda_{M}s}$,
bound by $\rho$, and evaluate the integrals to obtain the analogue
of \eqref{eq:Nonlinear-Bound-Post-Calculation},

\begin{eqnarray}
\norm{u_{\delta}\left(t\right)}_{X} & \leq & \delta Ce^{\lambda_{M}t}+\rho^{p-1}\left(t\right)\norm{u_{\delta}\left(0\right)}_{H^{n}}\frac{e^{\left(\left(p-1\right)\lambda_{M}-\theta\right)t}-e^{\lambda_{0}t}}{\left(p-1\right)\lambda_{M}-\left(\theta+\lambda_{0}\right)}\label{eq:Damping-Nonlinear-Growth}\\
 &  & +\left(\frac{C\rho^{p}\left(t\right)}{\theta+\lambda_{M}}\right)\left(\frac{e^{p\lambda_{M}t}-e^{\lambda_{0}t}}{p\lambda_{M}-\lambda_{0}}-\frac{e^{\left(\left(p-1\right)\lambda_{M}-\theta\right)t}-e^{\lambda_{0}t}}{\left(p-1\right)\lambda_{M}-\left(\theta+\lambda_{0}\right)}\right)\,.\nonumber 
\end{eqnarray}

Note that the hypotheses $\frac{\lambda_{0}+\theta}{p-1}<\lambda_{M}$
and $\frac{\lambda_{0}}{p}<\lambda_{M}$ imply that
\[
e^{\lambda_{0}t}<e^{\left(\left(p-1\right)\lambda_{M}-\theta\right)t}\quad\text{and}\quad e^{\lambda_{0}t}<e^{p\lambda_{M}t}\,,
\]

so we may focus solely on the larger exponential growth.

The upper bound in \eqref{eq:Damping-Nonlinear-Growth} also applies
for all $\norm{u_{\delta}\left(s\right)}_{X}$ for $s\leq t$. Then
after multiplying both sides of the inequality by $e^{-\lambda_{M}t}$
and taking the supremum over all $s\leq t$, we have the analogue
of \eqref{eq:Nonlinear-Bound-Polynomial},

\[
\rho\left(t\right)\leq\delta C+a_{p-1}\,\rho^{p-1}\left(t\right)e^{\left(p-2\right)\lambda_{M}t}+a_{p}\,\rho^{p}\left(t\right)e^{\left(p-1\right)\lambda_{M}t}\,,
\]

where $a_{p-1}$ and $a_{p}$ depend only on $C$, $p$, $\lambda_{0}$,
$\lambda_{M}$, $\theta$, and $\norm{u_{\delta}\left(0\right)}_{H^{n}}$.

Replacing the right hand side exponential terms' $t$ with $T$, recalling
\eqref{eq:T-1-1}, dividing both sides by $\delta$ and setting $z=\frac{\rho\left(t\right)}{\delta}$,
then we have the analogue of \eqref{eq:Nonlinear-Bound-Polynomial-2},
\begin{equation}
0\leq a_{p}\,\eta^{p-1}z^{p}+a_{p-1}\,\eta^{p-2}\delta\, z^{p-1}-z+C\,.\label{eq:Applied Example Polynomial Bound}
\end{equation}

To find a zero of this polynomial, we compare it with the linear function
$C-z$. In particular, on the interval $\left[0,L\right]$ we have
\[
\left|\left(a_{p}\,\eta^{p-1}z^{p}+a_{p-1}\,\eta^{p-2}\delta\, z^{p-1}-z+C\right)-\left(C-z\right)\right|\leq a_{p}\,\eta^{p-1}L^{p}+a_{p-1}\,\eta^{p-2}\delta\, L^{p-1}\,.
\]

Choosing $L=C+1$ has the function $C-z$ taking on the value $-1$
when $z=L$, and taking $\eta$ sufficiently small ensures that the
polynomial \eqref{eq:Applied Example Polynomial Bound} takes on a
negative value when $z=L$. As that same polynomial takes on the positive
value $C$ when $z=0$, then it has a zero. As a consequence, then
$\rho$ is uniformly bounded.
\end{proof}

\section*{Acknowledgment}

The author would like to thank Mathew Johnson and Kevin Zumbrun for
useful discussion and feedback.

\bibliographystyle{plain}
\bibliography{3C__Users_Connor_Dropbox_School_Research_Nonlin___ear_Instability_Paper_Nonlinear_Instability,4C__Users_Connor_Dropbox_School_Research_Nonlin___Instability_Paper_Dissipative_Type_Examples,5C__Users_Connor_Dropbox_School_Research_Nonlin___lity_Paper_Reaction-Diffusion_Type_Examples}

\begin{thebibliography}{10}

\bibitem{barker_numerical_2014}
Blake Barker.
\newblock Numerical proof of stability of roll waves in the small-amplitude
  limit for inclined thin film flow.
\newblock {\em Journal of Differential Equations}, 257(8):2950--2983, October
  2014.

\bibitem{barker_whitham_2010}
Blake Barker, Mathew~A. Johnson, Pascal Noble, L.~Miguel Rodrigues, and Kevin
  Zumbrun.
\newblock Whitham averaged equations and modulational stability of periodic
  traveling waves of a hyperbolic-parabolic balance law.
\newblock {\em Journes Equations aux d{\'e}riv{\'e}es partielles}, pages 1--24,
  2010.

\bibitem{barker_nonlinear_2013}
Blake Barker, Mathew~A. Johnson, Pascal Noble, L.~Miguel Rodrigues, and Kevin
  Zumbrun.
\newblock Nonlinear modulational stability of periodic traveling-wave solutions
  of the generalized {Kuramoto}-{Sivashinsky} equation.
\newblock {\em Physica D: Nonlinear Phenomena}, 258:11--46, September 2013.

\bibitem{bordyugov_anomalous_2010}
Grigory Bordyugov, Nils Fischer, Harald Engel, Niklas Manz, and Oliver
  Steinbock.
\newblock Anomalous dispersion in the {Belousov}-{Zhabotinsky} reaction:
  {Experiments} and modeling.
\newblock {\em Physica D: Nonlinear Phenomena}, 239(11):766--775, June 2010.

\bibitem{chicone_ordinary_2006}
Carmen Chicone.
\newblock {\em Ordinary differential equations with applications}, volume~34.
\newblock Springer Science \& Business Media, 2006.

\bibitem{evans_partial_1998}
Lawrence~C. Evans.
\newblock {\em Partial {Differential} {Equations}}.
\newblock Volume 19 of {Graduate} studies in mathematics. American Mathematical
  Society, second edition, 1998.

\bibitem{gani_instability_2015}
M.~Osman Gani and Toshiyuki Ogawa.
\newblock Instability of periodic traveling wave solutions in a modified
  {FitzHugh}-{Nagumo} model for excitable media.
\newblock {\em Applied Mathematics and Computation}, 256:968--984, April 2015.

\bibitem{henry_geometric_1993}
Dan Henry.
\newblock {\em Geometric theory of semilinear parabolic equations}.
\newblock Number 840 in Lecture notes in mathematics. Springer, Berlin, 3.
  printing edition, 1993.
\newblock OCLC: 257019414.

\bibitem{jin_nonlinear_2019}
Jiayin Jin, Shasha Liao, and Zhiwu Lin.
\newblock Nonlinear {Modulational} {Instability} of {Dispersive} {PDE}
  {Models}.
\newblock {\em Arch. Ration. Mech. Anal.}, 231:1487--1530, 2019.

\bibitem{johnson_spectral_2015}
Mathew Johnson, Pascal Noble, L.~Rodrigues, and Kevin Zumbrun.
\newblock Spectral stability of periodic wave trains of the {Korteweg}-de
  {Vries}/{Kuramoto}-{Sivashinsky} equation in the {Korteweg}-de {Vries} limit.
\newblock {\em Transactions of the American Mathematical Society},
  367(3):2159--2212, 2015.

\bibitem{johnson_stability_2013}
Mathew~A. Johnson.
\newblock Stability of {Small} {Periodic} {Waves} in {Fractional} {KdV}-{Type}
  {Equations}.
\newblock {\em SIAM Journal on Mathematical Analysis}, 45(5):3168--3193,
  January 2013.

\bibitem{johnson_nonlinear_2010}
Mathew~A. Johnson and Kevin Zumbrun.
\newblock Nonlinear stability of periodic traveling wave solutions of systems
  of viscous conservation laws in the generic case.
\newblock {\em Journal of Differential Equations}, 249(5):1213--1240, September
  2010.

\bibitem{kato_perturbation_1995}
Tosio Kato.
\newblock {\em Perturbation theory for linear operators}.
\newblock Classics in mathematics. Springer-Verlag, Berlin, 1995.

\bibitem{rademacher_computing_2007}
Jens~D.M. Rademacher, Bj{\"o}rn Sandstede, and Arnd Scheel.
\newblock Computing absolute and essential spectra using continuation.
\newblock {\em Physica D: Nonlinear Phenomena}, 229(2):166--183, May 2007.

\bibitem{rodrigues_periodic-coefficient_2016}
L.~Miguel Rodrigues and Kevin Zumbrun.
\newblock Periodic-{Coefficient} {Damping} {Estimates}, and {Stability} of
  {Large}-{Amplitude} {Roll} {Waves} in {Inclined} {Thin} {Film} {Flow}.
\newblock {\em SIAM Journal on Mathematical Analysis}, 48(1):268--280, January
  2016.

\bibitem{sherratt_numerical_2012}
Jonathan~A. Sherratt.
\newblock Numerical continuation methods for studying periodic travelling wave
  (wavetrain) solutions of partial differential equations.
\newblock {\em Applied Mathematics and Computation}, 218(9):4684--4694, January
  2012.

\bibitem{sherratt_numerical_2013}
Jonathan~A. Sherratt.
\newblock Numerical continuation of boundaries in parameter space between
  stable and unstable periodic travelling wave (wavetrain) solutions of partial
  differential equations.
\newblock {\em Advances in Computational Mathematics}, 39(1):175--192, July
  2013.

\bibitem{sherratt_nonlinear_2007}
Jonathan~A. Sherratt and Gabriel~J. Lord.
\newblock Nonlinear dynamics and pattern bifurcations in a model for vegetation
  stripes in semi-arid environments.
\newblock {\em Theoretical Population Biology}, 71(1):1--11, February 2007.

\bibitem{staffans_well-posed_2005}
Olof Staffans.
\newblock {\em Well-{Posed} {Linear} {Systems}}.
\newblock Number 103 in Encyclopedia of {Mathematics} and its {Applications}.
  Cambridge University Press, Cambridge, 2005.

\end{thebibliography}

\end{document}